\numberwithin{equation}{section}
 \def\Hom{\mbox{\rm Hom}} \def\dim{\mbox{\rm dim}\,} 
\def\lr#1{\langle #1\rangle} \def\fin{\hfill$\square$}   
\def\End{\mbox{\rm End}\,}\def\cone{\mbox{\rm cone}}\def\cocone{\mbox{\rm cocone}}
\def\ind{\mbox{\rm ind}\,}
\def\Aut{\mbox{\rm Aut}\,} 
 \def\Hom{\mbox{\rm Hom}\,}  \def\Im{\mbox{\rm Im}\,} \def\Im{\mbox{\rm Im}\,} \def\dim{\mbox{\rm dim}}\def\rad{\mbox{\rm rad}}
\theoremstyle{plain}
\newtheorem{theorem}{\bf Theorem}[section]
\newtheorem{lemma}[theorem]{\bf Lemma}
\newtheorem{corollary}[theorem]{\bf Corollary}
\newtheorem{proposition}[theorem]{\bf Proposition}
\theoremstyle{definition}
\newtheorem{definition}[theorem]{\bf Definition}
\newtheorem{remark}[theorem]{\bf Remark}
\newtheorem{example}[theorem]{\bf Example}
\newtheorem{condition}[theorem]{\bf Condition}
\newcommand{\bt}{\begin{theorem}}
\newcommand{\et}{\end{theorem}}
\newcommand{\bl}{\begin{lemma}}
\newcommand{\el}{\end{lemma}}
\newcommand{\bd}{\begin{definition}}
\newcommand{\ed}{\end{definition}}
\newcommand{\bc}{\begin{corollary}}
\newcommand{\ec}{\end{corollary}}
\newcommand{\bp}{\begin{proof}}
\newcommand{\ep}{\end{proof}}
\newcommand{\bx}{\begin{example}}
\newcommand{\ex}{\end{example}}
\newcommand{\br}{\begin{remark}}
\newcommand{\er}{\end{remark}}
\newcommand{\be}{\begin{equation}}
\newcommand{\ee}{\end{equation}}
\newcommand{\ba}{\begin{align}}
\newcommand{\ea}{\end{align}}
\newcommand{\bn}{\begin{enumerate}}
\newcommand{\en}{\end{enumerate}}
\newcommand{\bcs}{\begin{cases}}
\newcommand{\ecs}{\end{cases}}
\newcommand{\RNum}[1]{\uppercase\expandafter{\romannumeral #1\relax}}
\renewcommand{\section}{\@startsection{section}{1}{0mm}
  {-\baselineskip}{0.5\baselineskip}{\bf\leftline}}
\begin{document}
\title[Hall algebras of extriangulated categories]{Hall algebras of extriangulated categories}
\author[L. Wang, J. Wei, H. Zhang]{Li Wang, Jiaqun Wei,  Haicheng Zhang}
\address{Institute of Mathematics, School of Mathematical Sciences, Nanjing Normal University,
 Nanjing 210023, P. R. China.\endgraf}
\email{wl04221995@163.com (Wang); weijiaqun@njnu.edu.cn (Wei); zhanghc@njnu.edu.cn (Zhang).}

\subjclass[2010]{18E05, 18E30, 17B37.}
\keywords{Extriangulated categories; To\"{e}n's formulas; Hall algebras.}

\begin{abstract}
Recently, Nakaoka and Palu introduced a notion of extriangulated categories. This is a
unification of exact categories and triangulated categories. In this paper,
we generalize the definitions of Hall algebras of exact categories and triangulated categories to extriangulated categories.
\end{abstract}

\maketitle
\section{Introduction}
Hall algebras first appeared in works of Steinitz \cite{St} and Hall \cite{Hall} on commutative finite $p$-groups.
Later, they reappeared in the work of Ringel \cite{Ringel1} on quantum groups. Ringel introduced the notion of the Hall algebra of an abelian category with finite ${\rm Hom}$- and ${\rm Ext}^1$-spaces. By definition, it is a vector space with the basis parameterized by the isomorphism classes of
objects in the category, and the structure constants of the multiplication count in a natural
way the first extensions with a fixed middle term. The associativity formula of the Hall algebra of abelian categories is easily obtained by counting the filtration of an object. According to \cite{Ringel1} and \cite{Green}, the Hall algebra of a finite dimensional hereditary algebra over a finite filed provides a realization of the half part of the corresponding quantum group.

Hubery \cite{Hu} proved that the definition of the Hall algebra of abelian categories also applies to exact categories. He proved the associativity formulas of Hall algebras via the push-out and pull-back properties of exact categories. In order to give a realization of the entire quantum group via the Hall algebra approach, one tried to define the Hall algebra of triangulated categories. To\"{e}n \cite{To} gave a construction of
what he called derived Hall algebras for DG-enhanced triangulated categories satisfying
certain finiteness conditions. Later, Xiao and Xu \cite{XF} showed that To\"{e}n's definition also applies to any triangulated categories satisfying
certain finiteness conditions. They proved the associativity formulas of the derived Hall algebra by using only the properties and axioms of triangulated categories, such as the long exact sequence theorem and the octahedron axiom.

Recently, Nakaoka and Palu \cite{Na} has introduced a notion of extriangulated categories. This is a
unification of exact categories and triangulated categories. The positive and negative higher extensions in extriangulated categories have been defined in \cite{Go}.
A natural question has been asked by many people: Can one define
the Hall algebra of an extriangulated category? We refer to
\cite[Section 7]{FG} for a discussion of this question and the announcement
of a positive answer.

In this paper, we give a positive answer
based on the methods of \cite{XF}. We prove that Toen's formulas still
hold in an extriangulated category.
But this is not a trivial generalization. Compared with triangulated categories, the extriangulated categories have no shift functor $[1]$, i.e., for an object $Z$, the object $Z[1]$ does not exist. So, for an object $L$, we do not have the morphism $L\rightarrow Z[1]$ or its decomposition (cf. \cite[Lemma 2.2]{XF}).
In order to overcome this trouble, for an $\mathbb{E}$-triangle $X\stackrel{f}{\longrightarrow}L\stackrel{g}{\longrightarrow}Y\stackrel{\delta}\dashrightarrow$, we consider the decompositions of both $f$ and $g$. Meanwhile, instead of the group action of $\Aut (X,Y)$, we consider both the group action of $\Aut (X,L)$ and that of $\Aut (L,Y)$. Then by combining the two equations respectively provided by the decompositions of $f$ and $g$, we obtain To\"{e}n's formulas in an extriangulated category.

The paper is organized as follows: we summarize some basic definitions and properties of an extriangulated category in Section 2. In Section 3, we prove a key formula (see Theorem \ref{main2}) for a right locally homologically finite extriangulated category. Section 4 is devoted to defining the Hall algebra of extriangulated categories using the obtained formulas as structure constants. In Section 5, we prove To\"{e}n's formulas for a left locally homologically finite extriangulated category satisfying certain conditions. Using To\"{e}n's formulas as structure constants, we also define a Hall algebra for extriangulated categories in Section 6. Finally, for a locally homologically finite extriangulated category, we compare these two Hall algebras.

Throughout this paper, let $k$ be a finite field with $q$ elements, and $\mathscr{C}$ be an essentially small Krull-Schmidt additive $k$-linear category.
For an object $X\in\mathscr{C}$, we denote by $\End X$ and $\Aut X$ the endomorphism ring and the automorphism group of $X$; denote by $1=1_X$ and $[X]$ the identity morphism and the isomorphism class of $X$, respectively;
For a finite set $S$, we denote by $|S|$ its cardinality.

\section{Preliminaries}
Let us recall some notions and properties concerning extriangulated categories from \cite{Na}.

Let $\mathscr{C}$ be an additive category and let $\mathbb{E}$: $\mathscr{C}^{op}\times\mathscr{C}\rightarrow Ab$ be a biadditive functor. For any pair of objects $A$, $C\in\mathscr{C}$, an element $\delta\in \mathbb{E}(C,A)$ is called an {\em $\mathbb{E}$-extension}. The zero element $0\in\mathbb{E}(C,A)$ is called the {\em split $\mathbb{E}$-extension}.
 For any morphism $a\in \mathscr{C}(A,A')$ and $c\in {\mathscr{C}}(C',C)$, we have
$\mathbb{E}(C,a)(\delta)\in\mathbb{E}(C,A')$ and $\mathbb{E}(c,A)(\delta)\in\mathbb{E}(C',A).$
We simply denote them by $a_{\ast}\delta$ and $c^{\ast}\delta$,~respectively.  A morphism $(a,c)$: $\delta\rightarrow\delta'$ of $\mathbb{E}$-extensions is a pair of morphisms $a\in \mathscr{C}(A,A')$ and $c\in {\mathscr{C}}(C,C')$ satisfying the equality $a_{\ast}\delta=c^{\ast}\delta'$.
By Yoneda's lemma, any $\mathbb{E}$-extension $\delta\in \mathbb{E}(C,A)$ induces natural transformations
$$\delta_{\sharp}: \mathscr{C}(-,C)\rightarrow\mathbb{E}(-,A)~~\text{and}~~\delta^{\sharp}: \mathscr{C}(A,-)\rightarrow\mathbb{E}(C,-).$$ For any $X\in\mathscr{C}$, these
$(\delta_{\sharp})_X$ and $(\delta^{\sharp})_X$ are defined by
$(\delta_{\sharp})_X:\mathscr{C}(X,C)\rightarrow\mathbb{E}(X,A), f\mapsto f^\ast\delta$
 and $(\delta^{\sharp})_X:\mathscr{C}(A,X)\rightarrow\mathbb{E}(C,X), g\mapsto g_\ast\delta.$

Two sequences of morphisms $A\stackrel{x}{\longrightarrow}B\stackrel{y}{\longrightarrow}C$ and $A\stackrel{x'}{\longrightarrow}B'\stackrel{y'}{\longrightarrow}C$ in $\mathscr{C}$ are said to be {\em equivalent} if there exists an isomorphism $b\in \mathscr{C}(B,B')$ such that the following diagram
$$\xymatrix{
  A \ar@{=}[d] \ar[r]^-{x} & B\ar[d]_{b}^-{\simeq} \ar[r]^-{y} & C\ar@{=}[d] \\
  A \ar[r]^-{x'} &B' \ar[r]^-{y'} &C  }$$ is commutative.
We denote the equivalence class of $A\stackrel{x}{\longrightarrow}B\stackrel{y}{\longrightarrow}C$ by $[A\stackrel{x}{\longrightarrow}B\stackrel{y}{\longrightarrow}C]$. In addition, for any $A,C\in\mathscr{C}$, we denote as
$$0=[A\stackrel{1\choose0}{\longrightarrow}A\oplus C\stackrel{(0~1)}{\longrightarrow}C].$$
For any two classes $[A\stackrel{x}{\longrightarrow}B\stackrel{y}{\longrightarrow}C]$ and $[A'\stackrel{x'}{\longrightarrow}B'\stackrel{y'}{\longrightarrow}C']$, we denote as
$$[A\stackrel{x}{\longrightarrow}B\stackrel{y}{\longrightarrow}C]\oplus[A'\stackrel{x'}{\longrightarrow}B'\stackrel{y'}{\longrightarrow}C']=
[A\oplus A'\stackrel{x\oplus x'}{\longrightarrow}B\oplus B'\stackrel{y\oplus y'}{\longrightarrow}C\oplus C'].$$

\begin{definition}
Let $\mathfrak{s}$ be a correspondence which associates an equivalence class $\mathfrak{s}(\delta)=[A\stackrel{x}{\longrightarrow}B\stackrel{y}{\longrightarrow}C]$ to any $\mathbb{E}$-extension $\delta\in\mathbb{E}(C,A)$. This $\mathfrak{s}$ is called a {\em realization} of $\mathbb{E}$ if for any morphism $(a,c):\delta\rightarrow\delta'$ with $\mathfrak{s}(\delta)=[\Delta_{1}]$ and $\mathfrak{s}(\delta')=[\Delta_{2}]$, there is a commutative diagram as follows:
$$\xymatrix{
\Delta_{1}\ar[d] & A \ar[d]_-{a} \ar[r]^-{x} & B  \ar[r]^{y}\ar[d]_-{b} & C \ar[d]_-{c}    \\
 \Delta_{2}&A\ar[r]^-{x'} & B \ar[r]^-{y'} & C .   }
$$  A realization $\mathfrak{s}$ of $\mathbb{E}$ is said to be {\em additive} if it satisfies the following conditions:

(a) For any $A,~C\in\mathscr{C}$, the split $\mathbb{E}$-extension $0\in\mathbb{E}(C,A)$ satisfies $\mathfrak{s}(0)=0$.

(b) $\mathfrak{s}(\delta\oplus\delta')=\mathfrak{s}(\delta)\oplus\mathfrak{s}(\delta')$ for any pair of $\mathbb{E}$-extensions $\delta$ and $\delta'$.
\end{definition}

Let $\mathfrak{s}$ be an additive realization of $\mathbb{E}$. If $\mathfrak{s}(\delta)=[A\stackrel{x}{\longrightarrow}B\stackrel{y}{\longrightarrow}C]$, then the sequence $A\stackrel{x}{\longrightarrow}B\stackrel{y}{\longrightarrow}C$ is called a {\em conflation}, $x$ is called an {\em inflation} and $y$ is called a {\em deflation}.
In this case, we say that $A\stackrel{x}{\longrightarrow}B\stackrel{y}{\longrightarrow}C\stackrel{\delta}\dashrightarrow$ is an $\mathbb{E}$-triangle.
We will write $A=\cocone(y)$ and $C=\cone(x)$ if necessary. We say an $\mathbb{E}$-triangle is {\em splitting} if it
realizes 0.

\begin{definition}
(\cite[Definition 2.12]{Na})\label{F}
We call the triplet $(\mathscr{C}, \mathbb{E},\mathfrak{s})$ an {\em extriangulated category} if it satisfies the following conditions:\\
$\rm(ET1)$ $\mathbb{E}$: $\mathscr{C}^{op}\times\mathscr{C}\rightarrow Ab$ is a biadditive functor.\\
$\rm(ET2)$ $\mathfrak{s}$ is an additive realization of $\mathbb{E}$.\\
$\rm(ET3)$ Let $\delta\in\mathbb{E}(C,A)$ and $\delta'\in\mathbb{E}(C',A')$ be any pair of $\mathbb{E}$-extensions, realized as
$\mathfrak{s}(\delta)=[A\stackrel{x}{\longrightarrow}B\stackrel{y}{\longrightarrow}C]$, $\mathfrak{s}(\delta')=[A'\stackrel{x'}{\longrightarrow}B'\stackrel{y'}{\longrightarrow}C']$. For any commutative square in $\mathscr{C}$
$$\xymatrix{
  A \ar[d]_{a} \ar[r]^{x} & B \ar[d]_{b} \ar[r]^{y} & C \\
  A'\ar[r]^{x'} &B'\ar[r]^{y'} & C'}$$
there exists a morphism $(a,c)$: $\delta\rightarrow\delta'$ which is realized by $(a,b,c)$.\\
$\rm(ET3)^{op}$~Dual of $\rm(ET3)$.\\
$\rm(ET4)$~Let $\delta\in\mathbb{E}(D,A)$ and $\delta'\in\mathbb{E}(F,B)$ be $\mathbb{E}$-extensions realized by
$A\stackrel{f}{\longrightarrow}B\stackrel{f'}{\longrightarrow}D$ and $B\stackrel{g}{\longrightarrow}C\stackrel{g'}{\longrightarrow}F$, respectively.
Then there exist an object $E\in\mathscr{C}$, a commutative diagram
\begin{equation}\label{2.1}
\xymatrix{
  A \ar@{=}[d]\ar[r]^-{f} &B\ar[d]_-{g} \ar[r]^-{f'} & D\ar[d]^-{d} \\
  A \ar[r]^-{h} & C\ar[d]_-{g'} \ar[r]^-{h'} & E\ar[d]^-{e} \\
   & F\ar@{=}[r] & F   }
\end{equation}
in $\mathscr{C}$, and an $\mathbb{E}$-extension $\delta''\in \mathbb{E}(E,A)$ realized by $A\stackrel{h}{\longrightarrow}C\stackrel{h'}{\longrightarrow}E$, which satisfy the following compatibilities:\\
$(\textrm{i})$ $D\stackrel{d}{\longrightarrow}E\stackrel{e}{\longrightarrow}F$ realizes $\mathbb{E}(F,f')(\delta')$,\\
$(\textrm{ii})$ $\mathbb{E}(d,A)(\delta'')=\delta$,\\
$(\textrm{iii})$ $\mathbb{E}(E,f)(\delta'')=\mathbb{E}(e,B)(\delta')$.\\
$\rm(ET4)^{op}$ Dual of $\rm(ET4)$.
\end{definition}
From now on, we assume that $\mathscr{C}$ is always an extriangulated category. An object $P$ in $\mathscr{C}$ is called {\em projective} if for any conflation $A\stackrel{x}{\longrightarrow}B\stackrel{y}{\longrightarrow}C$ and any morphism $c: P\rightarrow C$, there exists a morphism $b: P\rightarrow B$ such that $yb=c$. We denote the full subcategory of projective objects in $\mathscr{C}$ by $\mathcal{P}$. Dually, the {\em injective} objects are defined, and the full subcategory of injective objects in $\mathscr{C}$ is denoted by $\mathcal{I}$. We say that $\mathscr{C}$ {\em has enough projective objects} if for any object $M\in\mathscr{C}$, there exists an $\mathbb{E}$-triangle $A\stackrel{}{\longrightarrow}P\stackrel{}{\longrightarrow}M\stackrel{}\dashrightarrow$ satisfying $P\in\mathcal{P}$. Dually, we define that $\mathscr{C}$ {\em has enough injective objects}. In particular, if $\mathscr{C}$ is a triangulated category, then $\mathscr{C}$  has enough projective objects and enough injective objects with $\mathcal{P}$ and $\mathcal{I}$ consisting of zero objects.

Recently, for $n>0$, the higher positive extensions $\mathbb{E}^n(-,-)$ in an extriangulated category have been defined in \cite{Go}. If $\mathscr{C}$ has enough projective objects or enough injective objects, these $\mathbb{E}^n$ are isomorphic to those defined in \cite{LN} (cf. \cite[Remark 3.4]{Go}). By convention, $\mathbb{E}^0=\Hom$.

\begin{proposition}$($\cite[Theorem 3.5]{Go}$)$ Let $A\stackrel{}{\longrightarrow}B\stackrel{}{\longrightarrow}C\stackrel{}\dashrightarrow$ be an $\mathbb{E}$-triangle in $\mathscr{C}$.

$(1)$ We have a long exact sequence
\begin{equation*}\begin{split}&\Hom(C,-)\rightarrow\Hom(B,-)\rightarrow\Hom(A,-)\rightarrow\mathbb{E}^{1}(C,-)\rightarrow\cdots\\
&\cdots\rightarrow\mathbb{E}^{n-1}(A,-)\rightarrow\mathbb{E}^{n}(C,-)\rightarrow\mathbb{E}^{n}(B,-)\rightarrow\mathbb{E}^{n}(A,-)\rightarrow\cdots,\end{split}\end{equation*}

$(2)$ Dually, we have a long exact sequence
\begin{equation*}\begin{split}&\Hom(-,A)\rightarrow\Hom(-,B)\rightarrow\Hom(-,C)\rightarrow\mathbb{E}^{1}(-,A)\rightarrow\cdots\\
&\cdots\rightarrow\mathbb{E}^{n-1}(-,C)\rightarrow\mathbb{E}^{n}(-,A)\rightarrow\mathbb{E}^{n}(-,B)\rightarrow\mathbb{E}^{n}(C,-)\rightarrow\cdots.\end{split}\end{equation*}
\end{proposition}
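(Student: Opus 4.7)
The plan is to induct on $n$, starting from the six-term $\Hom$--$\mathbb{E}^{1}$ exact sequence (the $n\le 1$ case).

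\textbf{Base case.} I would first establish exactness of
\[
\Hom(C,X)\to\Hom(B,X)\to\Hom(A,X)\stackrel{\partial}{\longrightarrow}\mathbb{E}^{1}(C,X)\to\mathbb{E}^{1}(B,X)\to\mathbb{E}^{1}(A,X),
\]
where $\partial(f)=f_{\ast}\delta$. Exactness at $\Hom(B,X)$ is the weak-cokernel property of the deflation $y$, which is built into the extriangulated structure. Exactness at $\Hom(A,X)$ uses the identity $x^{\ast}\delta=0$ together with (ET3)$^{op}$ to lift any $h\colon A\to X$ with $h_{\ast}\delta=0$ to a morphism $B\to X$. Exactness at $\mathbb{E}^{1}(C,X)$ is dual, using $y^{\ast}\delta=0$ and (ET3). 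Exactness at $\mathbb{E}^{1}(B,X)$ follows from (ET4)$^{op}$: given $\delta'\in\mathbb{E}^{1}(B,X)$ with $x^{\ast}\delta'=0$, applying the octahedron to a realization of $\delta'$ and to $\mathfrak{s}(\delta)$ produces a class in $\mathbb{E}^{1}(C,X)$ whose pullback along $y$ is $\delta'$.

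\textbf{Inductive step.} Using the description of $\mathbb{E}^{n}$ from \cite{Go} via Yoneda equivalence classes of $n$-fold $\mathbb{E}$-triangle concatenations, I would define the connecting map $\mathbb{E}^{n}(A,X)\to\mathbb{E}^{n+1}(C,X)$ by prepending the $\mathbb{E}$-triangle $A\to B\to C\dashrightarrow$ to an $n$-extension representing a given class. Exactness at each new position ($\mathbb{E}^{n+1}(C,X)$, $\mathbb{E}^{n+1}(B,X)$, and $\mathbb{E}^{n+1}(A,X)$) is then established by unwinding the Yoneda equivalence relation and invoking (ET4) together with its dual, either to split off the prepended triangle (when a composite extension becomes trivial) or to reassemble a given extension in the prepended form.

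\textbf{Part (2)} is obtained by applying part (1) in the opposite extriangulated category $(\mathscr{C}^{op},\mathbb{E}^{op},\mathfrak{s}^{op})$, which is again extriangulated.

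The main obstacle I anticipate is the inductive step: one must show that the connecting map is well-defined on Yoneda equivalence classes, natural in both variables, and that the resulting sequence is exact. These reduce, via (ET3), (ET4), and their duals, to finite diagrammatic manipulations on concatenations of $\mathbb{E}$-triangles, but the bookkeeping is delicate and constitutes the technical core of \cite{Go}.
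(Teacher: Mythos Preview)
The paper does not supply its own proof of this proposition: it is stated as a citation of \cite[Theorem 3.5]{Go} and used as background input, with no argument given in the present paper. So there is nothing here to compare your proposal against.

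That said, your outline is broadly consonant with how the result is established in \cite{Go}. The six-term sequence in your base case is already part of the foundational theory of extriangulated categories from \cite{Na} (so you need not re-derive it from (ET3)/(ET4) by hand), and the extension to higher $\mathbb{E}^{n}$ in \cite{Go} indeed proceeds via a Yoneda-type description of $n$-fold extensions, with the connecting maps given by splicing on the fixed $\mathbb{E}$-triangle, exactly as you describe. Your caveat about the bookkeeping in the inductive step is accurate: well-definedness on equivalence classes and exactness at each spot are the substantive points, and they are handled in \cite{Go} rather than here. For the purposes of this paper you may simply cite the result.
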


\section{Right locally homologically finite cases}
In this section, we assume that $\mathscr{C}$  satisfies the following conditions:

$(1)$ $\dim_{k}\mathbb{E}^{i}(X,Y)<\infty$   for any $X,Y\in\mathscr{C}$ and $i\geq0$.

$(2)$ $\mathscr{C}$ is {\em right locally homologically finite}, i.e., $\sum_{i\geq0}\dim_{k}\mathbb{E}^{i}(X,Y)<\infty$.

Given $X,Y,L\in\mathscr{C}$, let $$W_{XY}^{L}:=\{(f,g,\delta)~|~X\stackrel{f}{\longrightarrow}L\stackrel{g}{\longrightarrow}Y\stackrel{\delta}\dashrightarrow~\text{is~an}~\mathbb{E}\text{-triangle}\}.$$
For any $(f,g,\delta)\in W_{XY}^{L}$,
consider the following commutative diagram
\begin{equation*}\label{act}
\xymatrix{
 &  X\ar[d]^{x} \ar[r]^-{f} & L\ar[d]^{l} \ar[r]^-{g} & Y\ar[d]^-{y} \ar@{-->}[r]^-{\delta}   &  \\
 &  X \ar[r]^-{\overline{f}} & L \ar[r]^-{\overline{g}} & Y &  }
\end{equation*}
with $(x,l,y)\in \Aut X\times\Aut L\times\Aut Y$.  By \cite[Proposition 3.7]{Na}, $$X\stackrel{\overline{f}}{\longrightarrow}L\stackrel{\overline{g}}{\longrightarrow}Y\stackrel{{y^{-1}}^{\ast}x_{\ast}\delta}\dashrightarrow$$ is an $\mathbb{E}$-triangle. Hence, for any $(x,l,y)\in \Aut X\times\Aut L\times\Aut Y$, we have a map
 $$\phi_{xly}:W_{XY}^{L}\longrightarrow W_{XY}^{L};~(f,g,\delta)\mapsto(lfx^{-1},ygl^{-1},{y^{-1}}^{\ast}x_{\ast}\delta).$$

We need to consider several group actions on the set $W_{XY}^{L}$. Now, let us give these group actions and the corresponding orbit sets:

The action of $ \Aut X$ on $W_{XY}^{L}$ has the orbit set
$$V_{\overline{X}Y}^{L}=\{(f,g,\delta)_{X}~|~(f,g,\delta)\in W_{XY}^{L}\},$$
where $$(f,g,\delta)_{X}=\{\phi_{x11}((f,g,\delta))~|~x\in\Aut X\}.$$
Similarly, the actions of $\Aut L$ and $\Aut Y$ on $W_{XY}^{L}$ have the orbit sets $V_{XY}^{\overline{L}}$ and $V_{X\overline{Y}}^{{L}}$, respectively.

The action of $ \Aut L\times \Aut Y$ on $W_{XY}^{L}$  has the orbit set
$$V_{X\overline{Y}}^{\overline{L}}=\{(f,g,\delta)^{\wedge}~|~(f,g,\delta)\in W_{XY}^{L}\},$$
where $$(f,g,\delta)^{\wedge}=\{\phi_{1ly}((f,g,\delta))~|~(l,y)\in\Aut L\times \Aut Y\}.$$

The action of $ \Aut X\times \Aut L$  on $W_{XY}^{L}$ has the orbit set
$$V_{\overline{X}Y}^{\overline{L}}=\{(f,g,\delta)^{\vee}~|~(f,g,\delta)\in W_{XY}^{L}\},$$
where $$(f,g,\delta)^{\vee}=\{\phi_{xl1}((f,g,\delta))\mid(x,l)\in\Aut X\times \Aut L\}.$$

In what follows, for the simplicity of notation, we write $(-,-)$ as $\Hom(-,-)$ in $\mathscr{C}$.

Given $X,Y\in\mathscr{C}$, we set
$$\{X,Y\}':=\prod_{i>0}|\mathbb{E}^{i}(X,Y)|^{(-1)^{i}}$$
and
$$[X,Y]:=\frac{1}{\{X,Y\}'|(X,Y)|}.$$
Since $\mathscr{C}$ is right locally homologically finite, $\{X,Y\}'<\infty$ and $[X,Y]<\infty$.

\begin{lemma}\label{1}Let $(f,g,\delta)\in W_{XY}^{L}$.

$(1)$ We have that
$$|\Im(g,L)|=\frac{[X,L]}{[L,L]\{Y,L\}'}$$
and
$$|\Im(Y,g)|=\frac{\{Y,X\}'}{\{Y,L\}'[Y,Y]}.$$

$(2)$ We have that
$$|\Im(L,f)|=\frac{[L,Y]}{[L,L]\{L,X\}'}$$
and
$$|\Im(f,X)|=\frac{\{Y,X\}'}{\{L,X\}'[X,X]}.$$
\end{lemma}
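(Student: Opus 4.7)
The plan is to derive each of the four formulas by applying the long exact sequences of higher extensions recalled in the Proposition above to the $\mathbb{E}$-triangle $X\stackrel{f}{\longrightarrow}L\stackrel{g}{\longrightarrow}Y\stackrel{\delta}\dashrightarrow$, and reading off each image order as an alternating product of orders of extension groups.

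For the first formula in $(1)$, I would apply the long exact sequence of $\Hom(-,L)$ and its derived functors to the triangle, yielding
$$(Y,L)\xrightarrow{(g,L)}(L,L)\xrightarrow{(f,L)}(X,L)\rightarrow\mathbb{E}^{1}(Y,L)\rightarrow\mathbb{E}^{1}(L,L)\rightarrow\mathbb{E}^{1}(X,L)\rightarrow\cdots.$$
By the two standing assumptions of the section, each term is a finite abelian group and only finitely many terms are nonzero. Writing $K_0=\Ker(g,L)$, the standard identity for an exact sequence $0\to K\to C_0\to C_1\to\cdots$ of finite abelian groups that eventually vanishes gives $|K_0|=\prod_{i\geq 0}|C_i|^{(-1)^i}$. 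Grouping the terms into the three columns of extensions then produces
$$|K_0|=\prod_{i\geq 0}\left(\frac{|\mathbb{E}^{i}(Y,L)|\,|\mathbb{E}^{i}(X,L)|}{|\mathbb{E}^{i}(L,L)|}\right)^{(-1)^{i}}=\frac{[L,L]}{[X,L]\,[Y,L]},$$
where the second equality uses the elementary identity $\prod_{i\geq 0}|\mathbb{E}^{i}(A,B)|^{(-1)^{i}}=|(A,B)|\{A,B\}'=1/[A,B]$ that follows directly from the definitions of $[-,-]$ and $\{-,-\}'$. Combining $|\Im(g,L)|=|(Y,L)|/|K_0|$ with the identity $|(Y,L)|\,[Y,L]=1/\{Y,L\}'$ yields the stated formula.

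The remaining three formulas are obtained by the same recipe, each with the appropriate long exact sequence: for $|\Im(Y,g)|$ one applies $\Hom(Y,-)$ and its derived functors, which gives $(Y,X)\to(Y,L)\xrightarrow{(Y,g)}(Y,Y)\to\mathbb{E}^{1}(Y,X)\to\cdots$; for $|\Im(L,f)|$ one applies $\Hom(L,-)$; and for $|\Im(f,X)|$ one applies $\Hom(-,X)$. In each case I compute the order of the kernel of the leading map by the same alternating-product identity, then pass to the image by dividing the order of the source and simplifying via the factorizations $|(A,B)|\,[A,B]=1/\{A,B\}'$.

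The only genuine work is the bookkeeping in the infinite alternating product; once the identity $\prod_{i\geq 0}|\mathbb{E}^{i}(A,B)|^{(-1)^{i}}=1/[A,B]$ has been extracted from the definitions, all four formulas collapse to the claimed quotients of $\{-,-\}'$ and $[-,-]$ terms, and I anticipate no conceptual obstacle beyond careful tracking of signs and indices.
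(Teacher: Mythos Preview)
Your proposal is correct and follows essentially the same approach as the paper: both arguments apply the appropriate covariant or contravariant long exact sequence of higher extensions to the $\mathbb{E}$-triangle and extract the order of the desired image as an alternating product of the orders of the (finitely many nonzero, finite) extension groups. The only cosmetic difference is that you route through $|\Ker|$ and then divide the order of the source, whereas the paper truncates one step later and reads off $|\Im|$ directly from the tail $0\to\Im\to\cdots$; the two computations are equivalent and produce the same simplifications via $|(A,B)|\{A,B\}'=1/[A,B]$.
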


\begin{proof}Applying the functor $\Hom(-,L)$ to the $\mathbb{E}$-triangle $(f,g,\delta)$, we get the  exact sequence
$$ \Hom(Y,L)\stackrel{(g,L)}\rightarrow\Hom(L,L)\rightarrow\Hom(X,L)\rightarrow\mathbb{E}^{1}(Y,L)\rightarrow\cdots.$$
Then
\begin{align*}
|\Im(g,L)|&=\frac{|(L,L)|}{|(X,L)|}\frac{\prod_{i>0}|\mathbb{E}^{i}(L,L)|^{(-1)^{i}}}{\prod_{i>0}|\mathbb{E}^{i}(Y,L)|^{(-1)^{i}}\prod_{i>0}|\mathbb{E}^{i}(X,L)|^{(-1)^{i}}}\\
&=\frac{|(L,L)|}{|(X,L)|}\frac{\{L,L\}'}{\{Y,L\}'\{X,L\}'}\\
&=\frac{[X,L]}{[L,L]\{Y,L\}'}.
\end{align*}

Similarly,  applying the functor $\Hom(Y,-)$ to the $\mathbb{E}$-triangle $(f,g,\delta)$, we get the  exact sequence
$$ \Hom(Y,X)\rightarrow\Hom(Y,L)\stackrel{(Y,g)}\rightarrow\Hom(Y,Y)\rightarrow\mathbb{E}^{1}(Y,X)\rightarrow\cdots.$$
Then
 $$|\Im(Y,g)|=|(Y,Y)|\frac{\{Y,X\}'\{Y,Y\}'}{\{Y,L\}'}=\frac{\{Y,X\}'}{\{Y,L\}'[Y,Y]}.$$
The proof of (2) is similar.
\end{proof}

For $(f,g,\delta)\in W_{XY}^{L}$, we set
$$\mathbf{G}_{L}(f,g,\delta)_{Y}=\{l\in \Aut L~|~(f,g,\delta)_{Y}=(lf,gl^{-1},\delta)_{Y}\}.$$
That is, $\mathbf{G}_{L}(f,g,\delta)_{Y}$ is the stablizer of $(f,g,\delta)_{Y}\in V_{X\overline{Y}}^{L}$ under the action of $\Aut L$.
Dually, we set
$$\mathbf{G}_{Y}(f,g,\delta)_{L}=\{y\in \Aut Y~|~(f,g,\delta)_{L}=(f,yg,{y^{-1}}^{\ast}\delta)_{L}\}.$$

Denote by $\ind(\mathscr{C})$ the set of isomorphism classes of indecomposable objects in $\mathscr{C}$. For any morphism $g:L\rightarrow Y$, by \cite[Lemma 2.2]{XF}, there exists the decomposition $L=L_{1}\oplus L_{2}$, $Y=Y_{1}\oplus Y_{2}$ and  $(l,y)\in\Aut L\times\Aut Y$ such that
$$ygl=\begin{pmatrix} n_{11}&0\\0&n_{22}\end{pmatrix}:L_{1}\oplus L_{2}\rightarrow Y_{1}\oplus Y_{2}$$
with $n_{11}$ being an isomorphism and
$$n_{22}\in \rad\Hom(L_{2},Y_{2}):=\{r\in\Hom(L_{2},Y_{2})~|~r_{1}rr_{2} \text{~is not an isomorphism for any}$$ $$~r_{1}:Y_{2}\rightarrow A~\text{and}~r_{2}:A\rightarrow L_{2}~\text{with}~A\in \ind(\mathscr{C})\}.$$
We may denote $L_{1}$ and $Y_{1}$ by $L_{g}$ and $_{g}Y$, respectively. For $f:X\rightarrow Y$, we set
$$\Hom(Y,Z)f:=\{gf~|~g\in\Hom(Y,Z)\}$$
and
$$f\Hom(Z,X):=\{fh~|~h\in\Hom(Z,X)\}.$$

\begin{lemma}\label{3} Let $(f,g,\delta)\in W_{XY}^{L}$.

$(1)$ $1-\mathbf{G}_{L}(f,g,\delta)_{Y}=\{l\in \End L~|~l\in \Im(g,L)~\text{and}~1-l\in \Aut L\}$.

$(2)$ If $(f,g,\delta)^{\wedge}=(f_{1},g_{1},\delta_{1})^{\wedge}\in V_{X\overline{Y}}^{\overline{L}}$. Then $$|\mathbf{G}_{L}(f,g,\delta)_{Y}|=|\mathbf{G}_{L}(f_{1},g_{1},\delta_{1})_{Y}|.$$

$(3)$ We have that
$$|\mathbf{G}_{L}(f,g,\delta)_{Y}|=\frac{|\Im(g,L)||\Aut L_{g}|}{|\End L_{g}|}.$$



$(4)$ We have that
$$|\mathbf{G}_{Y}(f,g,\delta)_{L}|=\frac{|\Im(Y,g)||\Aut{_{g}Y}|}{|\End _{g}Y|}.$$
\end{lemma}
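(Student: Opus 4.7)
The plan is to establish (1) first, then use it together with (2) to count stabilizers in (3); part (4) will follow by a symmetric argument. For the inclusion $1-\mathbf{G}_{L}(f,g,\delta)_{Y}\subseteq\{m:m\in\Im(g,L),\,1-m\in\Aut L\}$ in (1), if $l\in\mathbf{G}_{L}(f,g,\delta)_{Y}$ the defining condition forces $lf=f$, so $(1-l)f=0$, and the long exact sequence $\Hom(Y,L)\xrightarrow{(g,L)}\Hom(L,L)\xrightarrow{(f,L)}\Hom(X,L)$ obtained by applying $\Hom(-,L)$ to the $\mathbb{E}$-triangle places $1-l$ in $\Im(g,L)$.

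For the reverse inclusion, which is the substantial step, I produce the required automorphism of $Y$ explicitly. Given $m=hg\in\Im(g,L)$ with $l:=1-m\in\Aut L$, set $y:=(1-gh)^{-1}$. The standard Jacobson-type identity $(1-gh)(1+g(1-hg)^{-1}h)=1$, verified by a direct expansion using $hg=1-l$, shows that $y\in\Aut Y$ whenever $l=1-hg$ is invertible in $\End L$; the formal computation $g(1-hg)^{-1}=(1-gh)^{-1}g$ then yields $yg=gl^{-1}$. The conditions $lf=(1-hg)f=f$ (since $gf=0$ holds in every $\mathbb{E}$-triangle) and $y^{-1\ast}\delta=(1-gh)^{\ast}\delta=\delta-h^{\ast}g^{\ast}\delta=\delta$ (using $g^{\ast}\delta=0$, which follows from the dual exact sequence $\Hom(L,L)\xrightarrow{(L,g)}\Hom(L,Y)\to\mathbb{E}^{1}(L,X)$ applied to $g=g\circ 1_{L}$) together give $l\in\mathbf{G}_{L}(f,g,\delta)_{Y}$. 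For (2), writing $(f_{1},g_{1},\delta_{1})=\phi_{1l_{0}y_{0}}(f,g,\delta)$, substituting into the stabilizer defining equations shows that $l'\mapsto l_{0}^{-1}l'l_{0}$ is a bijection between the two stabilizers.

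For (3), part (2) allows me to reduce, via the decomposition recalled before the statement from Lemma 2.2 of \cite{XF}, to the case $g=\begin{pmatrix}n_{11}&0\\0&n_{22}\end{pmatrix}:L_{g}\oplus L_{2}\to {_{g}Y}\oplus Y_{2}$ with $n_{11}$ an isomorphism and $n_{22}$ radical. Writing $h\in\Hom(Y,L)$ in blocks, the matrix entries of elements $m\in\Im(g,L)$ decouple; moreover, since $\mathscr{C}$ is Krull--Schmidt and $n_{22}$ is radical, the $(2,2)$-entry $m_{22}$ lies in $\rad\End L_{2}$, so $1-m_{22}$ is automatically invertible. A Schur-complement reduction then reveals that $1-m\in\Aut L$ is equivalent to $1-m_{11}-m_{12}(1-m_{22})^{-1}m_{21}\in\Aut L_{g}$; for each fixed choice of the remaining blocks this restricts the free variable $m_{11}\in\End L_{g}$ to a translate of $\Aut L_{g}$ inside $\End L_{g}$, giving exactly $|\Aut L_{g}|$ valid values of $m_{11}$ out of $|\End L_{g}|$ total. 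Combining with (1) and (2) produces the formula $|\mathbf{G}_{L}(f,g,\delta)_{Y}|=|\Im(g,L)|\cdot|\Aut L_{g}|/|\End L_{g}|$. Part (4) is entirely dual: using $\Hom(Y,-)$, the exact sequence $\Hom(Y,X)\to\Hom(Y,L)\xrightarrow{(Y,g)}\Hom(Y,Y)\to\mathbb{E}^{1}(Y,X)$, and the identification of ${_{g}Y}$ as the isomorphism block of $Y$, the same scheme applies verbatim. The main obstacle is the block-structure bookkeeping in (3): the entries of $\Im(g,L)$ involving $n_{22}$ are constrained to proper subsets of the ambient Hom-spaces, so the Schur-complement step must be carried out taking care that the free block is precisely $m_{11}\in\End L_{g}$ and that the radicality of $n_{22}$ genuinely supplies the invertibility of $1-m_{22}$.
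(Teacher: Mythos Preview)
Your proof is correct and follows essentially the same approach as the paper: establish (1), use (2) to pass to the diagonal form of $g$, and then count via the block structure. The only differences are presentational: for the reverse inclusion in (1) the paper simply records the chain $l\in\mathbf{G}_L(f,g,\delta)_Y\Leftrightarrow lf=f\Leftrightarrow 1-l\in\Im(g,L)$ (tacitly relying on $\rm(ET3)$ and \cite[Corollary~3.6]{Na} to supply the needed $y\in\Aut Y$), whereas you construct $y=(1-gh)^{-1}$ explicitly via the Jacobson identity; and in (3) the paper states the pointwise equivalence $1-l\in\Aut L\Leftrightarrow 1-t_{11}n_{11}\in\Aut L_g$ (using that the radical correction term does not affect invertibility) before counting, while your Schur-complement translate argument reaches the same count directly.
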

\begin{proof} (1) It is straightforward that $l\in\mathbf{G}_{L}(f,g,\delta)_{Y}\Leftrightarrow l\in\Aut L$ and $f=lf\Leftrightarrow l\in\Aut L$ and  $1-l=tg$ for some $t:Y\rightarrow L\Leftrightarrow l\in\Aut L$ and $1-l\in \Im(g,L)$.

(2) By hypothesis, there exists a commutative diagram
\begin{equation*}
\xymatrix{
 &  X\ar@{=}[d] \ar[r]^-{f} & L\ar[d]^{l} \ar[r]^-{g} & Y\ar[d]^{y} \ar@{-->}[r]^-{\delta}   &  \\
 &  X \ar[r]^-{f_{1}} & L \ar[r]^-{g_{1}} & Y \ar@{-->}[r]^-{\delta_{1}} &  }
\end{equation*}
with $(l,y)\in \Aut L\times\Aut Y$. Consider the map
$$\alpha:\mathbf{G}_{L}(f,g,\delta)_{Y}\rightarrow\mathbf{G}_{L}(f_{1},g_{1},\delta_{1})_{Y};~l'\mapsto ll'l^{-1}.$$
By (1), it is easy to see that $\alpha$ is a bijection.

(3) By \cite[Lemma 2.2]{XF}, there exists the decomposition $L=L_{g}\oplus L_{2}$, $Y=Y_{1}\oplus Y_{2}$ and  $(l,y)\in\Aut L\times\Aut Y$ such that
$$ygl=\begin{pmatrix} n_{11}&0\\0&n_{22}\end{pmatrix}:L_{g}\oplus L_{2}\rightarrow Y_{1}\oplus Y_{2}$$
with $n_{11}$ being an isomorphism and $n_{22}\in \rad\Hom(L_{2},Y_{2})$. Set $f'=l^{-1}f$, $g'=ygl$ and $\delta'={y^{-1}}^{\ast}\delta$.
Then $(f,g,\delta)^{\wedge}=(f',g',\delta')^{\wedge}$ in $V_{X\overline{Y}}^{\overline{L}}$.

If $l\in \Im(g',L)$, i.e., there exists a morphism
\begin{equation*}t=\begin{pmatrix} t_{11}&t_{12}\\t_{21}&t_{22}\end{pmatrix}:Y_{1}\oplus Y_{2}\rightarrow L_{g}\oplus L_{2} \end{equation*}
such that
\begin{equation}\label{juzhen}{1-l=1-tg'=\begin{pmatrix} 1-t_{11}n_{11}&-t_{12}n_{22}\\-t_{21}n_{11}&1-t_{22}n_{22}\end{pmatrix}.}\end{equation}
Since $n_{22}\in \rad\Hom(L_{2},Y_{2})$, we have that $1-t_{22}n_{22}\in \Aut L_{2}$. By applying some elementary transformations on the matrix in (\ref{juzhen}), we obtain that $1-l\in \Aut L$ if and only if $1-t_{11}n_{11}\in \Aut L_{g}$. By (1) and (2), we have that
\begin{align*}
|\mathbf{G}_{L}(f,g,\delta)_{Y}|&=|\mathbf{G}_{L}(f',g',\delta')_{Y}|\\
&=|1-\mathbf{G}_{L}(f',g',\delta')_{Y}|\\
&=|\{l\in \End L~|~l\in \Im(g',L)~\text{and}~1-l\in \Aut L\}|\\
&=|\Aut L_{g}||\Hom(Y_{1},L_{2})n_{11}||\Hom(Y_{2},L_{g})n_{22}||\Hom(Y_{2},L_{2})n_{22}|\\
&=\frac{|\Im(g',L)||\Aut L_{g}|}{|\End L_{g}|}\\
&=\frac{|\Im(g,L)||\Aut L_{g}|}{|\End L_{g}|}.
\end{align*}



(4) Note that $y\in\mathbf{G}_{Y}(f,g,\delta)_{L}\Leftrightarrow y\in\Aut Y$ and $\delta={y^{-1}}^{\ast}\delta\Leftrightarrow y\in\Aut Y$ and $ 1-y^{-1}=gt$ for some $t:Y\rightarrow L\Leftrightarrow y\in\Aut Y$ and $ 1-y^{-1}\in\Im(Y,g)$. Then it is proved by the analogous arguments as those for proving (3).
\end{proof}

\begin{lemma}\label{4} Given $X,Y,L\in\mathscr{C}$, consider the surjection
$$\sigma_{1}:V_{X\overline{Y}}^{L}\rightarrow V_{X\overline{Y}}^{\overline{L}};~(f,g,\delta)_{Y}\mapsto(f,g,\delta)^{\wedge}.$$

$(1)$ We have that
$$\sigma_{1}^{-1}((f,g,\delta)^{\wedge})=\{(lf,gl^{-1},\delta)_{Y}~|~l\in\Aut L\}.$$

$(2)$ We have that
$$|V_{X\overline{Y}}^{L}|=\sum_{(f,g,\delta)\in V_{X\overline{Y}}^{\overline{L}}}\frac{|\Aut L||\End L_{g}|}{|\Im(g,L)||\Aut L_{g}|}.$$

$(3)$ We have that
$$|V_{XY}^{\overline{L}}|=\sum_{(f,g,\delta)\in V_{X\overline{Y}}^{\overline{L}}}\frac{|\Aut Y||\End _{g}Y|}{|\Im(Y,g)||\Aut{_{g}Y}|}.$$

\end{lemma}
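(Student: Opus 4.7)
The plan is to prove (1) by directly unpacking the orbit definitions, and then derive the counting formulas (2) and (3) via orbit--stabilizer arguments for the residual action of $\Aut L$ on $V_{X\overline{Y}}^L$ (respectively $\Aut Y$ on $V_{XY}^{\overline{L}}$), invoking the stabilizer computations of Lemma \ref{3}.

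For (1), recall that $(f,g,\delta)^{\wedge}\in V_{X\overline{Y}}^{\overline{L}}$ is by definition the $\Aut L\times\Aut Y$-orbit of $(f,g,\delta)$ under the maps $\phi_{1ly}$, whereas elements of $V_{X\overline{Y}}^L$ are the $\Aut Y$-orbits under $\phi_{11y}$. Thus $\sigma_1^{-1}((f,g,\delta)^{\wedge})$ consists exactly of the $\Aut Y$-orbits whose representatives lie in the larger orbit. Since the $\Aut L$- and $\Aut Y$-actions on $W_{XY}^L$ commute, as one checks from the definition $\phi_{1ly}=\phi_{11y}\circ\phi_{1l1}=\phi_{1l1}\circ\phi_{11y}$, each element of $(f,g,\delta)^{\wedge}$ lies in the $\Aut Y$-orbit of some $\phi_{1l1}(f,g,\delta)=(lf,gl^{-1},\delta)$, and every such orbit conversely maps to $(f,g,\delta)^{\wedge}$. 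This gives the claimed description.

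For (2), note that $\Aut L$ acts naturally on $V_{X\overline{Y}}^L$ sending $(f,g,\delta)_Y$ to $(lf,gl^{-1},\delta)_Y$. By part (1), the $\Aut L$-orbit of $(f,g,\delta)_Y$ coincides with the fiber $\sigma_1^{-1}((f,g,\delta)^{\wedge})$, and the stabilizer of this action is by definition $\mathbf{G}_L(f,g,\delta)_Y$. Hence orbit--stabilizer combined with Lemma \ref{3}(3) yields
\begin{equation*}
|\sigma_1^{-1}((f,g,\delta)^{\wedge})|=\frac{|\Aut L|}{|\mathbf{G}_L(f,g,\delta)_Y|}=\frac{|\Aut L||\End L_g|}{|\Im(g,L)||\Aut L_g|}.
\end{equation*}
By Lemma \ref{3}(2) the right-hand side depends only on $(f,g,\delta)^{\wedge}$, so the summand is well-defined; partitioning $V_{X\overline{Y}}^L$ according to $\sigma_1$ and summing over representatives of $V_{X\overline{Y}}^{\overline{L}}$ gives the formula.

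For (3), the argument is entirely symmetric. I would introduce the analogous surjection $\sigma_2:V_{XY}^{\overline{L}}\to V_{X\overline{Y}}^{\overline{L}}$ sending $(f,g,\delta)_L\mapsto(f,g,\delta)^{\wedge}$, and run the same fiber analysis with the roles of $\Aut L$ and $\Aut Y$ interchanged to obtain $\sigma_2^{-1}((f,g,\delta)^{\wedge})=\{(f,yg,{y^{-1}}^{\ast}\delta)_L\mid y\in\Aut Y\}$. This is the $\Aut Y$-orbit of $(f,g,\delta)_L$ with stabilizer $\mathbf{G}_Y(f,g,\delta)_L$, so orbit--stabilizer together with Lemma \ref{3}(4) and summation yield (3). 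The only step requiring genuine care is the commutativity of the two actions used in part (1); once that is verified, parts (2) and (3) reduce to routine orbit counting.
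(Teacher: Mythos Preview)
Your proposal is correct and follows essentially the same route as the paper: part (1) is obtained by unwinding the orbit definitions (the paper phrases this via an explicit commutative diagram rather than invoking commutativity of the two actions, but the content is identical), and parts (2) and (3) are deduced by orbit--stabilizer for the residual $\Aut L$- and $\Aut Y$-actions together with Lemma~\ref{3}(3),(4). Your remark that Lemma~\ref{3}(2) guarantees well-definedness of the summand is a small addition the paper leaves implicit.
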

\begin{proof} (1) Assume that $(f_{1},g_{1},\delta_{1})_{Y}\in\sigma_{1}^{-1}((f,g,\delta)^{\wedge})$, then $(f_{1},g_{1},\delta_{1})^{\wedge}=(f,g,\delta)^{\wedge}$. Hence, we have a commutative diagram
\begin{equation*}
\xymatrix{
 &  X\ar@{=}[d] \ar[r]^-{f} & L\ar[d]^{l} \ar[r]^-{g} & Y\ar[d]^{y} \ar@{-->}[r]^-{\delta}   &  \\
 &  X \ar[r]^-{f_{1}} & L \ar[r]^-{g_{1}} & Y \ar@{-->}[r]^-{\delta_{1}} &  }
\end{equation*}
with $(l,y)\in \Aut L\times\Aut Y$. It follows that $(f_{1},g_{1},\delta_{1})=(lf,ygl^{-1},{y^{-1}}^{\ast}\delta)$ and $(f_{1},g_{1},\delta_{1})_{Y}=(lf,gl^{-1},\delta)_{Y}$. Conversely, it is clear that $(lf,gl^{-1},\delta)^{\wedge}=(f,g,\delta)^{\wedge}$ for any $l\in\Aut L$.

(2)  Since $\sigma_{1}$ is surjective, by Lemma \ref{3}(3) and (1), we have that
\begin{align*}
|V_{X\overline{Y}}^{L}|&=\sum_{(f,g,\delta)\in V_{X\overline{Y}}^{\overline{L}}}|\sigma_{1}^{-1}((f,g,\delta)^{\wedge})|\\
&=\sum_{(f,g,\delta)\in V_{X\overline{Y}}^{\overline{L}}}\frac{|\Aut L|}{|\mathbf{G}_{L}(f,g,\delta)_{Y}|}\\
&=\sum_{(f,g,\delta)\in V_{X\overline{Y}}^{\overline{L}}}\frac{|\Aut L||\End L_{g}|}{|\Im(g,L)||\Aut L_{g}|}.
\end{align*}

(3) It is proved by the analogous arguments as those for proving (2).
\end{proof}
Similarly, consider the surjections
$$\sigma_{2}:V_{\overline{X}Y}^{L}\rightarrow V_{\overline{X}Y}^{\overline{L}};~(f,g,\delta)_{X}\mapsto(f,g,\delta)^{\vee},$$
and
$$\sigma_{3}:V_{XY}^{\overline{L}}\rightarrow V_{\overline{X}Y}^{\overline{L}};~(f,g,\delta)_{L}\mapsto(f,g,\delta)^{\vee}.$$
\begin{lemma}\label{FZ} Given $X,Y,L\in\mathscr{C}$, we have that
$$|V_{\overline{X}Y}^{L}|=\sum_{(f,g,\delta)\in V_{\overline{X}Y}^{\overline{L}}}\frac{|\Aut L||\End _{f}L|}{|\Im(L,f)||\Aut _{f}L|}$$
and
$$|V_{XY}^{\overline{L}}|=\sum_{(f,g,\delta)\in V_{\overline{X}Y}^{\overline{L}}}\frac{|\Aut X||\End X_{f}|}{|\Im(f,X)||\Aut X_{f}|}.$$
\end{lemma}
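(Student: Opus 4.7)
The plan is to mirror the proof of Lemma \ref{4}, this time exploiting the decomposition of the inflation $f\colon X\to L$ (via \cite[Lemma 2.2]{XF}) in place of that of $g$, and considering the actions of $\Aut L$ on $V_{\overline{X}Y}^{L}$ and of $\Aut X$ on $V_{XY}^{\overline{L}}$ in place of the actions of $\Aut L$ on $V_{X\overline{Y}}^{L}$ and of $\Aut Y$ on $V_{XY}^{\overline{L}}$ used there.

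As preparation I would introduce the stabilizers
$$\mathbf{G}_{L}(f,g,\delta)_{X}=\{l\in\Aut L\mid (f,g,\delta)_{X}=(lf,gl^{-1},\delta)_{X}\}$$
and
$$\mathbf{G}_{X}(f,g,\delta)_{L}=\{x\in\Aut X\mid (f,g,\delta)_{L}=(fx^{-1},g,x_{\ast}\delta)_{L}\},$$
and then establish the obvious analogues of Lemma \ref{3}(1)--(4). The key identity
$$1-\mathbf{G}_{L}(f,g,\delta)_{X}=\{l\in\End L\mid l\in\Im(L,f),\ 1-l\in\Aut L\}$$
follows by applying $\Hom(L,-)$ to the $\mathbb{E}$-triangle to obtain the exact sequence $\Hom(L,X)\stackrel{(L,f)}{\longrightarrow}\Hom(L,L)\stackrel{(L,g)}{\longrightarrow}\Hom(L,Y)$, together with the observation that $gl=g$ is equivalent to $l-1\in\Im(L,f)$. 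Invariance of the cardinality on a $\vee$-orbit is obtained by conjugation as in Lemma \ref{3}(2). Using the block decomposition $X=X_{f}\oplus X_{2}$, ${L}={_{f}L}\oplus L_{2}$ in which $f$ takes the form $\begin{pmatrix}m_{11}&0\\0&m_{22}\end{pmatrix}$ with $m_{11}$ an isomorphism and $m_{22}\in\rad\Hom(X_{2},L_{2})$, the same matrix manipulation performed in Lemma \ref{3}(3)--(4) yields
$$|\mathbf{G}_{L}(f,g,\delta)_{X}|=\frac{|\Im(L,f)|\,|\Aut\,{_{f}L}|}{|\End\,{_{f}L}|}\quad\text{and}\quad|\mathbf{G}_{X}(f,g,\delta)_{L}|=\frac{|\Im(f,X)|\,|\Aut X_{f}|}{|\End X_{f}|}.$$

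With these formulas in hand I would carry out the surjection argument exactly as in Lemma \ref{4}. The fibre of $\sigma_{2}$ above $(f,g,\delta)^{\vee}$ equals $\{(lf,gl^{-1},\delta)_{X}\mid l\in\Aut L\}$ (by the same reasoning as Lemma \ref{4}(1)), and hence has cardinality $|\Aut L|/|\mathbf{G}_{L}(f,g,\delta)_{X}|$; summing over representatives in $V_{\overline{X}Y}^{\overline{L}}$ delivers the first identity. Dually, the fibre of $\sigma_{3}$ is $\{(fx^{-1},g,x_{\ast}\delta)_{L}\mid x\in\Aut X\}$, of cardinality $|\Aut X|/|\mathbf{G}_{X}(f,g,\delta)_{L}|$, producing the second.

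The step I expect to require the most care is the stabilizer characterization itself: given $l\in\Aut L$ with $gl=g$, one must produce an $x\in\Aut X$ such that $lf=fx^{-1}$ and $\delta=x_{\ast}\delta$ in order to genuinely conclude $l\in\mathbf{G}_{L}(f,g,\delta)_{X}$. This is supplied by (ET3)$^{\mathrm{op}}$ applied to the commutative square determined by $l$ and $1_{Y}$, which yields an endomorphism $x\in\End X$ fitting into a morphism of $\mathbb{E}$-triangles; the long exact sequences of Proposition 2.3 combined with a five-lemma style argument then confirm that this $x$ is in fact an automorphism, closing the gap.
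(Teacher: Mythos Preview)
Your proposal is correct and is exactly the argument the paper has in mind: the paper gives no proof beyond introducing the surjections $\sigma_{2}$ and $\sigma_{3}$ and saying ``similarly'', so what you have written is the intended dualization of Lemmas \ref{3} and \ref{4} with $f$ playing the role of $g$. The only remark is that your ``five-lemma style argument'' for showing the induced $x\in\End X$ is an automorphism is already available as \cite[Corollary 3.6]{Na}, which the paper invokes elsewhere (e.g.\ in Lemma \ref{11}); citing it directly streamlines that final step.
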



We denote by $(X,L)_{Y}$ the set consisting of inflations $f:X\rightarrow L$ such that $\cone(f)\cong Y$. Dually, we define $_{X}(L,Y)$. We denote by $\mathbb{E}(Y,X)_{L}$ the set consisting of extensions $\delta\in\mathbb{E}(Y,X)$ such that $\mathfrak{s}(\delta)=[X\stackrel{f}{\longrightarrow}L\stackrel{g}{\longrightarrow}Y]$.

\begin{lemma}\label{11} Given $X,Y,L\in\mathscr{C}$, the maps
$$\varphi_{1}:V_{X\overline{Y}}^{L}\rightarrow{(X,L)_{Y}};~(f,g,\delta)_{Y}\mapsto f,$$

$$\varphi_{2}:V_{XY}^{\overline{L}}\rightarrow\mathbb{E}(Y,X)_{L};~(f,g,\delta)_{Y}\mapsto \delta,$$
and
$$\varphi_{3}:V_{\overline{X}Y}^{L}\rightarrow{_{X}(L,Y)};~(f,g,\delta)_{Y}\mapsto g$$
are  bijections.

\end{lemma}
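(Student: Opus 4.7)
The plan is to verify, for each of $\varphi_1,\varphi_2,\varphi_3$, three things: descent to orbits, surjectivity, and injectivity. The three arguments are parallel (one per coordinate of the triple), so I would write out $\varphi_1$ in full and indicate the modifications for the other two.

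Well-definedness is immediate from the group actions defined above: $\phi_{1,1,y}$ fixes the first coordinate (so $\varphi_1$ descends to $V_{X\overline{Y}}^{L}$), $\phi_{1,l,1}$ fixes $\delta$ (so $\varphi_2$ descends to $V_{XY}^{\overline{L}}$), and $\phi_{x,1,1}$ fixes $g$ (so $\varphi_3$ descends to $V_{\overline{X}Y}^{L}$). For surjectivity of $\varphi_1$, given $f\in(X,L)_{Y}$ choose any $\mathbb{E}$-triangle $X\stackrel{f}{\longrightarrow}L\stackrel{g'}{\longrightarrow}Y'\stackrel{\delta'}{\dashrightarrow}$ with $Y'\simeq Y$, and transport along an isomorphism $\alpha:Y'\to Y$ using the ``act'' diagram recalled at the top of this section (based on \cite[Prop.~3.7]{Na}); the resulting $(f,\alpha g',(\alpha^{-1})^{\ast}\delta')\in W_{XY}^{L}$ has image $f$. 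Surjectivity of $\varphi_2$ is immediate from the definition of $\mathbb{E}(Y,X)_{L}$, since $\mathfrak{s}(\delta)=[X\stackrel{f}{\longrightarrow}L\stackrel{g}{\longrightarrow}Y]$ directly provides a preimage. Surjectivity of $\varphi_3$ is dual to that of $\varphi_1$.

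The nontrivial step, and the step I expect to be the main obstacle, is the injectivity of $\varphi_1$ (and dually of $\varphi_3$). Given two triples $(f,g_1,\delta_1),(f,g_2,\delta_2)\in W_{XY}^{L}$ sharing the first coordinate, I would apply (ET3) to the commutative square with outer sides $a=1_{X}$ and $b=1_{L}$ (both realisations involving the same $f$) to produce $c:Y\to Y$ such that $(1_{X},1_{L},c)$ is a morphism of $\mathbb{E}$-triangles, giving $cg_1=g_2$ and $c^{\ast}\delta_2=\delta_1$, equivalently $\delta_2=(c^{-1})^{\ast}\delta_1$. The real obstacle is to argue $c\in\Aut Y$: for this I would invoke the extriangulated ``five lemma''---a morphism of $\mathbb{E}$-triangles whose two outer vertical arrows are isomorphisms forces the third to be an isomorphism---a consequence of (ET3), (ET3)$^{op}$ and the long exact sequences in Proposition~2.2, see \cite{Na}. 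Setting $y:=c$ then gives $\phi_{1,1,y}(f,g_1,\delta_1)=(f,g_2,\delta_2)$, placing the two triples in the same $\Aut Y$-orbit.

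Injectivity of $\varphi_2$ is softer and requires no five-lemma step: two triples $(f_i,g_i,\delta)$ with the same $\delta$ both realise the single equivalence class $\mathfrak{s}(\delta)$, so the definition of equivalence of sequences supplies an $l\in\Aut L$ with $lf_1=f_2$ and $g_2 l=g_1$, which is precisely $\phi_{1,l,1}$ applied to $(f_1,g_1,\delta)$. Injectivity of $\varphi_3$ is dual to that of $\varphi_1$, applying (ET3)$^{op}$ to the square with outer sides $b=1_{L}$ and $c=1_{Y}$ and invoking the dual five lemma to extract the required $x\in\Aut X$.
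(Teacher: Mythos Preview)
Your proposal is correct and follows essentially the same route as the paper. The paper's proof is terser---it states well-definedness and surjectivity together from the orbit description, then for injectivity of $\varphi_1$ applies (ET3) to the square with $a=1_X$, $b=1_L$ and invokes \cite[Corollary~3.6]{Na} (the extriangulated five-lemma you anticipated) to force $y\in\Aut Y$; your more explicit treatment of surjectivity via transport along an isomorphism, and your separate handling of $\varphi_2$ via the definition of $\mathfrak{s}(\delta)$, are just unpackings of what the paper leaves implicit.
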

\begin{proof} Recall that the orbit
$$(f,g,\delta)_{Y}=\{(f,yg,{y^{-1}}^{\ast}\delta)\mid y\in\Aut Y\}.$$
Thus $\varphi$ is a well-defined surjection. Now, assume that $\varphi_{1}((f,g,\delta)_{Y})=\varphi_{1}((f_{1},g_{1},\delta_{1})_{Y})$. Then, by $\rm(ET3)$ and \cite[Corollary 3.6]{Na},  we have the commutative diagram
\begin{equation*}\label{act}
\xymatrix{
 &  X\ar@{=}[d] \ar[r]^-{f} & L\ar@{=}[d] \ar[r]^-{g} & Y\ar@{-->}[d]^{y} \ar@{-->}[r]^-{\delta}   &  \\
 &  X \ar[r]^-{f_{1}} & L \ar[r]^-{g_{1}} & Y \ar@{-->}[r]^-{\delta_{1}} &  }
\end{equation*}
with $y\in \Aut Y$. It follows that $(f,g,\delta)_{Y}=(f_{1},g_{1},\delta_{1})_{Y}$. Similarly, we can prove that $\varphi_{2}$ and $\varphi_{3}$ are bijections.
\end{proof}

Now, we have the following key formula.
\begin{proposition}\label{main}  Given $X,Y,L\in\mathscr{C}$, we have that
$$\sum_{(f,g,\delta)\in V_{X\overline{Y}}^{\overline{L}}}\frac{|\End _{g}Y|}{|\Aut _{g}Y|}\{Y,L\}'=\frac{[X,L]}{[L,L]}\frac{|(X,L)_{Y}|}{|\Aut L|}=\frac{\{Y,X\}'}{[Y,Y]}\frac{|\mathbb{E}(Y,X)_{L}|}{|\Aut Y|}.$$
\end{proposition}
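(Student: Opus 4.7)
The plan is to rewrite both sides in terms of a common sum indexed by $V_{X\overline{Y}}^{\overline{L}}$, using the bijections of Lemma \ref{11}, the counting lemma for orbits (Lemma \ref{4}), and the dimension formulas of Lemma \ref{1}.

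First I would handle the middle term. By Lemma \ref{11}(1), the bijection $\varphi_{1}$ gives $|(X,L)_{Y}|=|V_{X\overline{Y}}^{L}|$. Substituting the formula from Lemma \ref{4}(2) yields
\begin{equation*}
\frac{|(X,L)_{Y}|}{|\Aut L|}=\sum_{(f,g,\delta)\in V_{X\overline{Y}}^{\overline{L}}}\frac{|\End L_{g}|}{|\Im(g,L)||\Aut L_{g}|}.
\end{equation*}
Now I would use the identity $|\Im(g,L)|=\frac{[X,L]}{[L,L]\{Y,L\}'}$ from Lemma \ref{1}(1) to pull out the factor $\frac{[X,L]}{[L,L]}$, which produces $\{Y,L\}'$ inside the sum. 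The key observation to match the left-hand side is then that in the decomposition $ygl=\operatorname{diag}(n_{11},n_{22})$ used to define $L_{g}$ and ${_{g}Y}$, the block $n_{11}:L_{g}\to{_{g}Y}$ is an isomorphism, hence $L_{g}\cong{_{g}Y}$; in particular $|\End L_{g}|=|\End{_{g}Y}|$ and $|\Aut L_{g}|=|\Aut{_{g}Y}|$. This produces exactly the left-hand side of the claimed equality.

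Second, I would handle the right-hand term by the dual procedure. Lemma \ref{11}(2) gives $|\mathbb{E}(Y,X)_{L}|=|V_{XY}^{\overline{L}}|$, and Lemma \ref{4}(3) expresses this as
\begin{equation*}
|V_{XY}^{\overline{L}}|=\sum_{(f,g,\delta)\in V_{X\overline{Y}}^{\overline{L}}}\frac{|\Aut Y||\End{_{g}Y}|}{|\Im(Y,g)||\Aut{_{g}Y}|}.
\end{equation*}
Using $|\Im(Y,g)|=\frac{\{Y,X\}'}{\{Y,L\}'[Y,Y]}$ from Lemma \ref{1}(1), the prefactor $\frac{\{Y,X\}'}{[Y,Y]|\Aut Y|}$ converts the right-hand side into the same sum $\sum\frac{|\End{_{g}Y}|}{|\Aut{_{g}Y}|}\{Y,L\}'$ that arose from the middle term, which closes the chain of equalities.

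The main obstacle is really the bookkeeping: verifying that the two different paths — one through $(X,L)_{Y}$, decomposing $g$ to get $L_{g}$, and one through $\mathbb{E}(Y,X)_{L}$, where ${_{g}Y}$ appears directly — land on the same expression. This rests on the isomorphism $L_{g}\cong{_{g}Y}$ given by the nonsingular diagonal block $n_{11}$ of the canonical matrix form of $g$, together with consistent use of Lemma \ref{1}. Once that identification is made, all remaining manipulations are routine algebra with the finiteness constants $\{-,-\}'$ and $[-,-]$.
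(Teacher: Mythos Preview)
Your proposal is correct and follows essentially the same route as the paper: both arguments combine the bijections of Lemma \ref{11}, the orbit-counting formulas of Lemma \ref{4}(2),(3), the explicit values of $|\Im(g,L)|$ and $|\Im(Y,g)|$ from Lemma \ref{1}(1), and the identification $L_{g}\cong{_{g}Y}$ coming from the invertible block $n_{11}$. The paper presents the computation slightly more compactly by first writing the chain
\[
|\Im(g,L)|\frac{|V_{X\overline{Y}}^{L}|}{|\Aut L|}=\sum\frac{|\End L_{g}|}{|\Aut L_{g}|}=\sum\frac{|\End{_{g}Y}|}{|\Aut{_{g}Y}|}=|\Im(Y,g)|\frac{|V_{XY}^{\overline{L}}|}{|\Aut Y|}
\]
and only then substituting the values from Lemma \ref{1}, but this is just a reordering of the same steps you describe.
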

\begin{proof} By Lemma \ref{4} and Lemma \ref{1}, we obtain that
\begin{align*}
|\Im(g,L)|\frac{|V_{X\overline{Y}}^{L}|}{|\Aut L|}&=\sum_{(f,g,\delta)\in V_{X\overline{Y}}^{\overline{L}}}\frac{|\End L_{g}|}{|\Aut L_{g}|}\\
&=\sum_{(f,g,\delta)\in V_{X\overline{Y}}^{\overline{L}}}\frac{|\End _{g}Y|}{|\Aut _{g}Y|}\\
&=|\Im(Y,g)|\frac{|V_{XY}^{\overline{L}}|}{|\Aut Y|}.
\end{align*}
Using Lemma \ref{1}, we have that
$$\frac{[X,L]}{[L,L]\{Y,L\}'}\frac{|V_{X\overline{Y}}^{L}|}{|\Aut L|}=\frac{\{Y,X\}'}{\{Y,L\}'[Y,Y]}\frac{|V_{XY}^{\overline{L}}|}{|\Aut Y|}.$$
By Lemma \ref{11}, we complete the proof.
\end{proof}

Dually, we also have the following formula.
\begin{proposition}\label{S} Given $X,Y,L\in\mathscr{C}$, we have that
$$\sum_{(f,g,\delta)\in V_{\overline{X}Y}^{\overline{L}}}\frac{|\End X_{f}|}{|\Aut X_{f}|}\{L,X\}'=\frac{[L,Y]}{[L,L]}\frac{|_{X}(L,Y)|}{|\Aut L|}=\frac{\{Y,X\}'}{[X,X]}\frac{|\mathbb{E}(Y,X)_{L}|}{|\Aut X|}.$$
\end{proposition}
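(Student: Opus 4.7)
The plan is to mirror the proof of Proposition \ref{main} by exchanging the roles played by $f$ and $g$. Where that proof used the group actions of $\Aut L$ and $\Aut Y$ together with Lemma \ref{4}, Lemma \ref{1}(1), and the bijection $\varphi_{1}$, here I will use the group actions of $\Aut L$ and $\Aut X$ together with Lemma \ref{FZ}, Lemma \ref{1}(2), and the bijection $\varphi_{3}$; the dualized infrastructure is already in place.

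Concretely, I would first rearrange the two equations of Lemma \ref{FZ} into
\begin{equation*}
|\Im(L,f)|\,\frac{|V_{\overline{X}Y}^{L}|}{|\Aut L|}=\sum_{(f,g,\delta)\in V_{\overline{X}Y}^{\overline{L}}}\frac{|\End _{f}L|}{|\Aut _{f}L|}=\sum_{(f,g,\delta)\in V_{\overline{X}Y}^{\overline{L}}}\frac{|\End X_{f}|}{|\Aut X_{f}|}=|\Im(f,X)|\,\frac{|V_{XY}^{\overline{L}}|}{|\Aut X|}.
\end{equation*}
Pulling $|\Im(L,f)|$ and $|\Im(f,X)|$ outside the sums is legitimate because Lemma \ref{1}(2) shows these cardinalities depend only on $X$, $Y$, $L$. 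The middle equality is the dual of the identity $|\End L_{g}|/|\Aut L_{g}|=|\End _{g}Y|/|\Aut _{g}Y|$ used in the proof of Proposition \ref{main}, and it follows because in the block decomposition $lfx=\mathrm{diag}(m_{11},m_{22})$ of $f$ provided by \cite[Lemma 2.2]{XF}, the block $m_{11}$ is an isomorphism $X_{f}\stackrel{\sim}{\to}{_{f}L}$, so $|\End X_{f}|=|\End _{f}L|$ and $|\Aut X_{f}|=|\Aut _{f}L|$.

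I would then substitute the formulas $|\Im(L,f)|=[L,Y]/([L,L]\{L,X\}')$ and $|\Im(f,X)|=\{Y,X\}'/(\{L,X\}'[X,X])$ from Lemma \ref{1}(2) into both ends of the chain and clear the common factor $\{L,X\}'$, obtaining
\begin{equation*}
\frac{[L,Y]}{[L,L]}\frac{|V_{\overline{X}Y}^{L}|}{|\Aut L|}=\sum_{(f,g,\delta)\in V_{\overline{X}Y}^{\overline{L}}}\frac{|\End X_{f}|}{|\Aut X_{f}|}\{L,X\}'=\frac{\{Y,X\}'}{[X,X]}\frac{|V_{XY}^{\overline{L}}|}{|\Aut X|}.
\end{equation*}
Finally, the bijections $\varphi_{3}\colon V_{\overline{X}Y}^{L}\to{_{X}(L,Y)}$ and $\varphi_{2}\colon V_{XY}^{\overline{L}}\to\mathbb{E}(Y,X)_{L}$ from Lemma \ref{11} replace $|V_{\overline{X}Y}^{L}|$ by $|_{X}(L,Y)|$ and $|V_{XY}^{\overline{L}}|$ by $|\mathbb{E}(Y,X)_{L}|$, yielding precisely the three-way identity claimed in the proposition.

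There is no serious obstacle, since every ingredient has been proved in the dualized form already. The one subtle point worth double-checking is the invariance of the quantities $|\End X_{f}|$, $|\Aut X_{f}|$, $|\End _{f}L|$, $|\Aut _{f}L|$ across representatives of the $\Aut X\times\Aut L$-orbit of $(f,g,\delta)$, which is what allows the orbit-indexed summation to be well-defined; this is implicit in the statements of Lemmas \ref{1}(2) and \ref{FZ} and reduces ultimately to the fact that the block decomposition of $f$ is unique up to automorphisms of the source and target.
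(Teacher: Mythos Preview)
Your proposal is correct and is precisely the argument the paper has in mind: its proof reads ``Similarly, using Lemma \ref{1}, Lemma \ref{FZ} and Lemma \ref{11}, we complete the proof,'' and what you wrote is exactly the dualization of the proof of Proposition \ref{main} spelled out in full. The only cosmetic point is that ``clear the common factor $\{L,X\}'$'' means multiplying the three-term chain through by $\{L,X\}'$, which is indeed what yields the displayed identity.
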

\begin{proof} Similarly, using Lemma \ref{1}, Lemma \ref{FZ} and Lemma \ref{11}, we complete the proof.
\end{proof}

Let us state the  main result in this section in the following
\begin{theorem}\label{main2} Given $X,Y,L\in\mathscr{C}$, we have that
$$\frac{|_{X}(L,Y)|}{|\Aut Y|}\frac{[L,Y]}{[Y,Y]}=\frac{|(X,L)_{Y}|}{|\Aut X|}\frac{[X,L]}{[X,X]}.$$

\end{theorem}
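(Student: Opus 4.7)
The plan is to deduce Theorem \ref{main2} by combining the two propositions just established, eliminating the common quantity $|\mathbb{E}(Y,X)_{L}|$ that appears in each of their rightmost equalities. The symmetry between Proposition \ref{main} and Proposition \ref{S} has been engineered precisely to enable such a cancellation: the first groups data around $Y$ and yields $|(X,L)_{Y}|$ paired with $|\Aut Y|$ and $[Y,Y]$, while the second groups data around $X$ and yields $|_{X}(L,Y)|$ paired with $|\Aut X|$ and $[X,X]$.

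Concretely, I would first read off from Proposition \ref{main} the identity
$$\frac{[X,L]}{[L,L]}\frac{|(X,L)_{Y}|}{|\Aut L|}=\frac{\{Y,X\}'}{[Y,Y]}\frac{|\mathbb{E}(Y,X)_{L}|}{|\Aut Y|},$$
and from Proposition \ref{S} the dual identity
$$\frac{[L,Y]}{[L,L]}\frac{|_{X}(L,Y)|}{|\Aut L|}=\frac{\{Y,X\}'}{[X,X]}\frac{|\mathbb{E}(Y,X)_{L}|}{|\Aut X|}.$$
Solving each for $|\mathbb{E}(Y,X)_{L}|$ produces two expressions that must be equal; after equating them, the common factor $|\Aut L|\,[L,L]\,\{Y,X\}'$ appears on both sides and cancels uniformly.

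What remains after the cancellation is the identity
$$[X,L]\,[Y,Y]\,|(X,L)_{Y}|\,|\Aut Y|=[L,Y]\,[X,X]\,|_{X}(L,Y)|\,|\Aut X|,$$
and dividing through by $|\Aut X|\,|\Aut Y|\,[X,X]\,[Y,Y]$ yields precisely the formula asserted in Theorem \ref{main2}. The argument is a purely algebraic consolidation of the previous results, requiring no further extriangulated-categorical input; the only point that needs care is tracking the powers of $|\Aut L|$, $[L,L]$ and $\{Y,X\}'$ faithfully so that they cancel completely. I do not foresee any genuine obstacle, since the matching of factors on the two sides is guaranteed by the dual construction of Propositions \ref{main} and \ref{S}.
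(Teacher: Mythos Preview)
Your proposal is correct and follows essentially the same approach as the paper: both arguments extract from Propositions \ref{main} and \ref{S} the equalities linking $|(X,L)_Y|$ and $|_X(L,Y)|$ respectively to $|\mathbb{E}(Y,X)_L|$, then eliminate the common factors $|\Aut L|$, $[L,L]$ and $\{Y,X\}'$ to obtain the desired identity. The only cosmetic difference is that the paper rearranges each proposition before taking the quotient, whereas you solve for $|\mathbb{E}(Y,X)_L|$ first; the algebra is identical.
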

\begin{proof} By Proposition \ref{main}, we have that
$$[X,L]|(X,L)_{Y}|=\frac{\{Y,X\}'}{[Y,Y]}\frac{|\mathbb{E}(Y,X)_{L}|}{|\Aut Y|}[L,L]|\Aut L|.$$
By Proposition \ref{S}, we have that
$$[L,Y]|_{X}(L,Y)|=\frac{\{Y,X\}'}{[X,X]}\frac{|\mathbb{E}(Y,X)_{L}|}{|\Aut X|}[L,L]|\Aut L|.$$
Thus, we conclude that
$$\frac{[X,L]|(X,L)_{Y}|}{[L,Y]|_{X}(L,Y)|}=\frac{[X,X]|\Aut X|}{[Y,Y]|\Aut Y|}.$$
This finishes the proof.
\end{proof}

Let $X,Y,L\in\mathscr{C}$ and $\mathcal{X}$ be a subset of $W_{XY}^{L}$. Then the actions of $\Aut X$ and $\Aut Y$ on $W_{XY}^{L}$ naturally induces the actions on $\mathcal{X}$. We denote the orbit sets of $\mathcal{X}$ under the actions of $\Aut X$ and $\Aut Y$ by $\mathcal{X}_{X}$ and $\mathcal{X}_{Y}$, respectively. The following observation is useful for the next section.
\begin{lemma}\label{TT} Let $X,Y,L\in\mathscr{C}$ and $\mathcal{X}$ be a subset of $W_{XY}^{L}$. Then we have that
$$\frac{|\mathcal{X}_{X}|}{|\Aut Y|}\frac{[L,Y]}{[Y,Y]}=\frac{|\mathcal{X}_{Y}|}{|\Aut X|}\frac{[X,L]}{[X,X]}.$$

\end{lemma}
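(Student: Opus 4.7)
The plan is to reduce the asserted identity to a pointwise statement on a single joint orbit and then use an Euler-characteristic computation to collapse it to an equality of two stabilizer orders.

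First, since (for the two actions to be defined on $\mathcal{X}$) the set $\mathcal{X}$ is stable under both $\Aut X$ and $\Aut Y$, and since these actions commute on $W_{XY}^{L}$, $\mathcal{X}$ is a disjoint union of $\Aut X \times \Aut Y$-orbits $\bigsqcup_i O_i$. Both $|\mathcal{X}_X|$ and $|\mathcal{X}_Y|$ are additive over this decomposition, so it suffices to establish the identity when $\mathcal{X}$ equals a single joint orbit $O$ with representative $(f,g,\delta)$. For such an $O$, let $s_X$ (respectively $s_Y$) denote the order of the $\Aut X$- (resp.\ $\Aut Y$-) stabilizer of $(f,g,\delta)$; these orders are constant along $O$ by conjugacy. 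The orbit--stabilizer theorem yields $|O_X| = |O|\,s_X/|\Aut X|$ and $|O_Y| = |O|\,s_Y/|\Aut Y|$, so the identity on $O$ becomes
\[
s_X \cdot \frac{[L,Y]}{[Y,Y]} \;=\; s_Y \cdot \frac{[X,L]}{[X,X]}.
\]

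Next I would use the long exact sequences of the cited proposition (from \cite{Go}) applied to the $\mathbb{E}$-triangle $X\to L\to Y\dashrightarrow$. Right local homological finiteness makes $\chi(A,B) := \sum_{i \ge 0}(-1)^i\dim_k \mathbb{E}^i(A,B)$ finite, and evaluating the long exact sequences in both variables gives $\chi(X,L) = \chi(X,X) + \chi(X,Y)$ and $\chi(L,Y) = \chi(X,Y) + \chi(Y,Y)$, hence $\chi(X,L) + \chi(Y,Y) = \chi(L,Y) + \chi(X,X)$. Since $[A,B] = q^{-\chi(A,B)}$, this translates to $[X,L]/[X,X] = [L,Y]/[Y,Y]$, and the per-orbit identity collapses to the pointwise claim $s_X = s_Y$.

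The main obstacle is this pointwise equality of stabilizer orders for an arbitrary $\mathbb{E}$-triangle $(f,g,\delta)$. I would prove it by embedding both stabilizers into the subgroup
\[
G_s := \bigl\{(x,y) \in \Aut X \times \Aut Y : fx = f,\ yg = g,\ x_*\delta = y^*\delta\bigr\}
\]
via $x\mapsto(x,1)$ and $y\mapsto(1,y)$; these embeddings are precisely the kernels of the two projections $G_s \to \Aut Y$ and $G_s \to \Aut X$. Counting $|G_s|$ two ways via these projections reduces $|s_X|=|s_Y|$ to equality of the two projection images, which must in turn be analyzed by examining the orbits of $X_f := \{x \in \Aut X : fx = f\}$ and $Y_g := \{y \in \Aut Y : yg = g\}$ acting on $\delta \in \mathbb{E}(Y,X)$ by pushforward and pullback respectively. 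The hard part is establishing that these two orbit actions produce stabilizers of the same size; this relies on carefully unpacking the long exact sequences at both $X$ and $Y$ together with the vanishing $f_*\delta = 0$ in $\mathbb{E}(Y,L)$, which holds because $f$ is part of the realization of $\delta$.
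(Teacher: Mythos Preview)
Your reduction to a single $\Aut X\times\Aut Y$-orbit is correct, and the per-orbit identity $s_X\cdot[L,Y]/[Y,Y]=s_Y\cdot[X,L]/[X,X]$ is indeed what has to be shown. The error is the Euler-characteristic step: the additivity $\chi(X,L)=\chi(X,X)+\chi(X,Y)$ fails because the long exact sequence for $\mathbb{E}^{i\ge 0}$ does not start with an injection. The map $f_*\colon(X,X)\to(X,L)$ has a kernel in general, and the alternating sum acquires the defect $\dim\ker f_*$; the dual defect $\dim\ker\big(g^*\colon(Y,Y)\to(L,Y)\big)$ is a different number, so in general $[X,L]/[X,X]\neq[L,Y]/[Y,Y]$ and your reduction to $s_X=s_Y$ collapses. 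In fact $s_X=s_Y$ is itself false. In the triangulated category $D^b(kA_2)$ (quiver $1\to 2$), take the $\mathbb{E}$-triangle with $X=P_1\oplus S_2$, $L=S_2\oplus S_2[1]$, $Y=P_1[1]\oplus S_2[1]$, arising as the cone of $\mathrm{diag}(1_{P_1},0)$ on $P_1\oplus S_2$; here $f=\left(\begin{smallmatrix}0&1\\0&0\end{smallmatrix}\right)$, $g=\left(\begin{smallmatrix}0&0\\0&1\end{smallmatrix}\right)$, $\delta=\left(\begin{smallmatrix}1&0\\0&0\end{smallmatrix}\right)$. One computes $\chi(X,X)=\chi(Y,Y)=3$, $\chi(X,L)=1$, $\chi(L,Y)=2$, hence $[X,L]/[X,X]=q^{2}$ while $[L,Y]/[Y,Y]=q$; and for this triple $s_X=q$, $s_Y=1$. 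The per-orbit identity holds as $q\cdot q=1\cdot q^{2}$, but neither factor matches separately, so no stabilizer analysis of the type you outline can succeed.

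The paper does not attempt to split the identity into ``equal ratios times equal stabilizers''. It reruns the argument for Theorem~\ref{main2}: one brings in the $\Aut L$-action, expresses the relevant stabilizer orders through the morphism-decomposition Lemmas~\ref{3}--\ref{FZ}, and uses Lemma~\ref{1} to convert the image cardinalities $|\Im(g,L)|$, $|\Im(Y,g)|$, $|\Im(L,f)|$, $|\Im(f,X)|$ into the symbols $[{-},{-}]$ and $\{{-},{-}\}'$. Both sides are then linked through the common orbit set under $\Aut L$ and the common factor $\{Y,X\}'$. The kernel defects that break your Euler-characteristic computation are exactly what Lemma~\ref{1} absorbs into these image counts, and this is what allows the unequal stabilizers and the unequal bracket ratios to balance.
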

\begin{proof} It is proved by the analogous arguments as those for proving Theorem \ref{main2}.
\end{proof}

\section{Hall algebras of extriangulated categories, ${\rm{\RNum{1}}}$}
In this section, keep the conditions on $\mathscr{C}$ in  Section 3, we define the {Hall algebra} of extriangulated categories using the formula in Theorem \ref{main2}.
\begin{definition}\label{H1} The Hall algebra $\mathcal{H}(\mathscr{C})$ of the extriangulated category $\mathscr{C}$ is a $\mathbb{Q}$-space with the basis $\{u_{[X]}~|~X\in \mathscr{C}\}$ and the multiplication defined by
\begin{equation*}u_{[X]}\ast u_{[Y]}=\sum_{[L]}F_{XY}^{L}u_{[L]}\end{equation*}
where $$F_{XY}^{L}:=\frac{|_{X}(L,Y)|}{|\Aut Y|}\frac{[L,Y]}{[Y,Y]}=\frac{|(X,L)_{Y}|}{|\Aut X|}\frac{[X,L]}{[X,X]}.$$
\end{definition}

Let us state the main result in this paper as the following
\begin{theorem}\label{main3} The Hall algebra $\mathcal{H}(\mathscr{C})$ of the extriangulated category $\mathscr{C}$ is an associative algebra.
\end{theorem}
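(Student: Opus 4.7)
The plan is to fix $X,Y,Z,M\in\mathscr{C}$ and compare the coefficients of $u_{[M]}$ in $(u_{[X]}\ast u_{[Y]})\ast u_{[Z]}$ and in $u_{[X]}\ast(u_{[Y]}\ast u_{[Z]})$, reducing the claim to the identity
$$\sum_{[L]} F_{XY}^L F_{LZ}^M=\sum_{[N]} F_{YZ}^N F_{XN}^M.$$
Following Xiao--Xu's strategy in the triangulated case, I will interpret both sides as weighted counts of certain nine-term ``octahedral'' configurations and use (ET4) together with (ET4)$^{op}$ to biject them. The essential novelty, compared to the triangulated setting, is that both descriptions of $F_{XY}^L$ from Theorem \ref{main2} -- one via inflations, one via deflations -- must be used, one on each side of the identity, so that the weight bookkeeping closes up.

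First I will rewrite the LHS using the inflation descriptions $F_{XY}^L=\frac{|(X,L)_Y|}{|\Aut X|}\frac{[X,L]}{[X,X]}$ and $F_{LZ}^M=\frac{|(L,M)_Z|}{|\Aut L|}\frac{[L,M]}{[L,L]}$, presenting it as a weighted sum over pairs of composable inflations $X\xrightarrow{f}L\xrightarrow{h}M$ (varying $[L]$) with $\cone(f)\cong Y$ and $\cone(h)\cong Z$, normalized by $|\Aut X|\cdot|\Aut L|$. Dually, I will rewrite the RHS via the deflation descriptions of $F_{YZ}^N$ and $F_{XN}^M$, expressing it as a weighted sum over pairs of composable deflations $M\xrightarrow{h'}N\xrightarrow{e}Z$ with $\cocone(h')\cong X$ and $\cocone(e)\cong Y$, normalized by $|\Aut Z|\cdot|\Aut N|$.

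Next I will apply (ET4) to the pair of $\mathbb{E}$-triangles $X\xrightarrow{f}L\xrightarrow{g}Y\dashrightarrow$ and $L\xrightarrow{h}M\xrightarrow{k}Z\dashrightarrow$ that parametrize the LHS, obtaining the nine-term diagram displayed in the axiom with $N=\cone(hf)$ together with new $\mathbb{E}$-triangles $X\xrightarrow{hf}M\xrightarrow{h'}N\dashrightarrow$ and $Y\xrightarrow{d}N\xrightarrow{e}Z\dashrightarrow$ that realize the RHS data. The dual axiom (ET4)$^{op}$ gives the inverse construction, recovering $L$ and its triangles from the RHS data. The uniqueness parts of (ET4) and (ET4)$^{op}$ show that this correspondence descends to a bijection between orbits of inflation pairs modulo $\Aut L$ on the LHS and orbits of deflation pairs modulo $\Aut N$ on the RHS.

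The main obstacle is the weight matching. After the bijection, the LHS weight $\frac{[X,L][L,M]}{[X,X][L,L]\,|\Aut X||\Aut L|}$ must equal the RHS weight $\frac{[N,Z][M,N]}{[Z,Z][N,N]\,|\Aut Z||\Aut N|}$. My plan is to invoke Lemma \ref{TT} for the top triangle $X\to L\to Y$ to convert $\frac{[X,L]}{[X,X]|\Aut X|}$ into $\frac{[L,Y]}{[Y,Y]|\Aut Y|}$ at the orbit level, and analogously for the right column $Y\to N\to Z$, after which the long exact sequences for $\mathbb{E}^i$ (applied to each of the six $\mathbb{E}$-triangles of the octahedral diagram) force the alternating $\{-,-\}'$ factors to telescope into exact agreement. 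The delicate point -- and the reason Lemma \ref{TT} was isolated in Section 3 -- is precisely this coordinated use of the two descriptions of $F$ across the diagram; neither description alone suffices.
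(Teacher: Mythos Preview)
Your approach differs from the paper's, and it has a genuine gap.

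You assert that ``the uniqueness parts of (ET4) and (ET4)$^{op}$'' make the octahedral correspondence descend to a bijection on orbits, but (ET4) carries no uniqueness clause: given composable inflations $X\xrightarrow{f}L\xrightarrow{h}M$, the axiom only asserts the \emph{existence} of a completion $(N,d,h',e)$, and different completions need not be related by an element of $\Aut N$ in the way you require. This is precisely the obstruction flagged in the introduction: in the triangulated case Xiao--Xu circumvent it by parametrizing via morphisms into a shifted object and their decomposition, which is unavailable here. Your weight-matching plan has the same problem: it assumes the $[{-},{-}]$-factors telescope along the long exact sequences attached to the six $\mathbb{E}$-triangles of the octahedron, but $[A,B]=\prod_{i\ge0}|\mathbb{E}^i(A,B)|^{(-1)^{i+1}}$ is \emph{not} additive on $\mathbb{E}$-triangles, since those sequences are not exact at their left end. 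Lemma~\ref{TT} does supply a weight identity, but only for two orbit spaces of a \emph{single} subset of some $W^{L}_{XY}$; a configuration of six octahedral triangles is not of that shape.

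The paper's proof is built around a different organizing idea. On each side one factor is written in the inflation form and the other in the deflation form, so that the two parametrizing morphisms combine into a single deflation $(f~g):X\oplus M\to L$ on the left and a single inflation $(f~g)^{T}:L'\to X\oplus M$ on the right (Lemma~\ref{Zh}). Lemma~\ref{Q} shows that the cone/cocone constraints on the components of these two morphisms are equivalent, hence both sides become orbit counts of the \emph{same} subset $W(L',X\oplus M,L)_{ZY}\subset W^{X\oplus M}_{L'L}$, under $\Aut L'$ and $\Aut L$ respectively (Lemma~\ref{Wa}). Lemma~\ref{TT} then applies directly to this subset and yields Proposition~\ref{fin}, which is exactly the required identity; no octahedral bijection and no telescoping are needed. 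If you want to salvage your outline, the missing move is to replace the pair of composable inflations by a single morphism to or from $X\oplus M$, after which everything reduces to one application of Lemma~\ref{TT}.
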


Before proving Theorem \ref{main3}, we need some preparations.

We set
$$_{Z}^{L'}(X\oplus M,L)_{Y}=\{(f~g):X\oplus M\rightarrow L~|~\cone(f)\cong Y,\cocone(g)\cong Z~\text{and}~\cocone((f~g))\cong L'\}$$
and
$$_{Z}(L',X\oplus M)_{Y}^{L}=\{(f~g)^{T}:L'\rightarrow X\oplus M~|~\cocone(f)\cong Z,\cone(g)\cong Y, \text{and}~\cone((f~g)^{T})\cong L\}.$$

\begin{lemma}\label{Zh}

$(1)$ The map
$$\tau_{1}:(X,L)_{Y}\times{_{Z}(M,L)}\rightarrow \bigcup_{[L']}{_{Z}^{L'}}(X\oplus M,L)_{Y};~f\times g\mapsto (f~g)$$
is a  bijection.

$(2)$ The map
$$\tau_{2}:{_{Z}(L',X)}\times{(L',M)_{Y}}\rightarrow \bigcup_{[L]}{_{Z}}(L',X\oplus M)_{Y}^{L};~f\times g\mapsto (f~g)^{T}$$
is a  bijection.
\end{lemma}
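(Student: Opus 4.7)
I would prove (1) first; part (2) then follows by passing to the opposite extriangulated category $\mathscr{C}^{op}$, in which inflations and deflations, and cones and cocones, are interchanged. For $\tau_{1}$, the argument splits into three steps: well-definedness, injectivity, and surjectivity, of which only well-definedness is non-formal.

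For well-definedness, the constraints $\cone(f)\cong Y$ and $\cocone(g)\cong Z$ appearing on the codomain side are read off directly from the input pair, so the real issue is to prove that the matrix morphism $(f\ g)\colon X\oplus M\to L$ is itself a deflation, so that its cocone is defined and gives some object $L'$. I plan to establish this via the pullback construction in the extriangulated category: starting from the $\mathbb{E}$-triangle $Z\to M\stackrel{g}{\to}L\dashrightarrow$ realizing $g$ as a deflation and the morphism $f\colon X\to L$, the base-change property of deflations (cf.\ \cite[Prop.\ 3.15]{Na}) produces an object $P$, a deflation $g'\colon P\to X$, a morphism $f'\colon P\to M$ with $fg'=gf'$, and an $\mathbb{E}$-triangle $Z\to P\to X\dashrightarrow$. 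Assembling these via $(\mathrm{ET4})^{op}$ yields the $\mathbb{E}$-triangle
$$P\stackrel{\binom{-g'}{f'}}{\longrightarrow}X\oplus M\stackrel{(f\ g)}{\longrightarrow}L\dashrightarrow,$$
which exhibits $(f\ g)$ as a deflation with $L':=P$, and simultaneously identifies its cocone. This is the only substantive step.

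Injectivity is immediate: precomposing $(f\ g)$ with the canonical inclusions $X\hookrightarrow X\oplus M$ and $M\hookrightarrow X\oplus M$ recovers $f$ and $g$ uniquely, so the input pair is determined by the output matrix. Surjectivity is equally clean: any $h=(f\ g)\in{_{Z}^{L'}}(X\oplus M,L)_{Y}$ already comes, by the very definition of this set, with an inflation $f\in(X,L)_{Y}$ (since $\cone(f)\cong Y$ is imposed) and a deflation $g\in{_Z(M,L)}$ (since $\cocone(g)\cong Z$ is imposed), so $h=\tau_1(f,g)$ lies in the image.

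The main obstacle is the production of the $\mathbb{E}$-triangle realizing $(f\ g)$ as a deflation; once this is in place, everything else is formal matrix bookkeeping. The key technical inputs are the pullback of a deflation along an arbitrary morphism and the octahedron-type axiom $(\mathrm{ET4})^{op}$, both of which are standard in the extriangulated framework of \cite{Na}. The proof of $\tau_{2}$ mirrors the above, with pushouts replacing pullbacks and inflations exchanged with deflations, or equivalently by direct duality in $\mathscr{C}^{op}$.
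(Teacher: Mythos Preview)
Your argument is correct and follows the same route as the paper: both construct the cocone $L'$ as the pullback of the deflation $g\colon M\to L$ along $f\colon X\to L$, and deduce that $(f\ g)$ is a deflation with cocone $L'$; injectivity and surjectivity are then formal, and part (2) is dual. The paper phrases the construction as an application of $(\mathrm{ET4})^{op}$ together with the dual of \cite[Cor.~3.16]{Na}, while you cite \cite[Prop.~3.15]{Na}---these are the same underlying pullback construction. One small correction: the $\mathbb{E}$-triangle $P\to X\oplus M\to L$ is already part of the conclusion of \cite[Prop.~3.15]{Na} (or the dual of \cite[Cor.~3.16]{Na}); there is no separate $(\mathrm{ET4})^{op}$ step needed to ``assemble'' it, and in fact no pair of composable deflations is available at that point to feed into $(\mathrm{ET4})^{op}$ directly.
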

\begin{proof} (1) By $\rm(ET4)^{op}$, there exists a commutative diagram
\begin{equation*}
\xymatrix{
 &  Z\ar@{=}[d] \ar[r]^-{} & L'\ar[d]^{} \ar[r]^-{} & X\ar[d]^{f} \ar@{-->}[r]   &  \\
 &  Z \ar[r]^-{} & M \ar[r]^-{g} & L \ar@{-->}[r] & . }
\end{equation*}
By the dual of \cite[Corollary 3.16]{Na}, we obtain that $(f~g)\in{_{Z}^{L'}}(X\oplus M,L)_{Y}$. By definition, it is easy to see that $\tau_{1}$ is a bijection. The proof of (2) is similar.
\end{proof}

\begin{lemma}\label{s1}
Given a commutative diagram of $\mathbb{E}$-triangles
$$ \xymatrix{
   X\ar@{=}[d] \ar[r]^-{f} & L\ar[d]^{b} \ar[r]^-{g} & Y\ar[d]^{y} \ar@{-->}[r]^-{\delta}   &  \\
   X \ar[r]^-{f'} & L' \ar[r]^-{g'} & Y' \ar@{-->}[r]^-{\delta'} & . }$$
   Then there exists a morphism $s:Y\rightarrow Y'$ such that the sequence
   $$L\stackrel{(b~g)^T}{\longrightarrow}L'\oplus Y\stackrel{(g'~-s)}{\longrightarrow}Y'\stackrel{f_{\ast}\delta'}\dashrightarrow$$
   is an $\mathbb{E}$-triangle and $s^{\ast}\delta'=\delta$. Dually for the commutative diagram of $\mathbb{E}$-triangles
   $$ \xymatrix{
   X\ar[d]^-{x} \ar[r]^-{f} & L\ar[d]^{b} \ar[r]^-{g} & Y\ar@{=}[d] \ar@{-->}[r]^-{\delta}   &  \\
   X' \ar[r]^-{f'} & L' \ar[r]^-{g'} & Y \ar@{-->}[r]^-{\delta'} & . }$$
\end{lemma}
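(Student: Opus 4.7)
The plan is to produce the desired $\mathbb{E}$-triangle in two stages: first invoke the base change property of extriangulated categories to build a ``standard'' $\mathbb{E}$-triangle of shape $\widetilde L\to L'\oplus Y\to Y'$, and then conjugate by a unipotent automorphism of $L'\oplus Y$ so that the inflation acquires the required form $(b,g)^T$.

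For the first stage, apply the base change property (cf.\ \cite{Na}; the weak pullback of an $\mathbb{E}$-triangle along a morphism into its cone) to the $\mathbb{E}$-triangle $X\stackrel{f'}{\longrightarrow}L'\stackrel{g'}{\longrightarrow}Y'\stackrel{\delta'}\dashrightarrow$ along $y\colon Y\to Y'$. This produces an $\mathbb{E}$-triangle $X\stackrel{\widetilde f}{\longrightarrow}\widetilde L\stackrel{\widetilde g}{\longrightarrow}Y\stackrel{y^{\ast}\delta'}\dashrightarrow$, a morphism $\widetilde b\colon\widetilde L\to L'$ filling in the commutative diagram, and an $\mathbb{E}$-triangle
\[
\widetilde L\stackrel{(\widetilde b,\widetilde g)^T}{\longrightarrow}L'\oplus Y\stackrel{(g',-y)}{\longrightarrow}Y'\stackrel{\widetilde f_{\ast}\delta'}\dashrightarrow.
\]
Since $(1,y)\colon\delta\to\delta'$ is given to be a morphism of extensions, one has $y^{\ast}\delta'=\delta$, and both $X\stackrel{f}{\longrightarrow}L\stackrel{g}{\longrightarrow}Y$ and the top row of the above diagram realize $\delta$. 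Hence there is an isomorphism $\iota\colon L\to\widetilde L$ with $\iota f=\widetilde f$ and $\widetilde g\iota=g$, and precomposing with $\iota$ transports the displayed $\mathbb{E}$-triangle to
\[
L\stackrel{(\widetilde b\iota,\,g)^T}{\longrightarrow}L'\oplus Y\stackrel{(g',-y)}{\longrightarrow}Y'\stackrel{f_{\ast}\delta'}\dashrightarrow.
\]

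For the second stage, compare $\widetilde b\iota$ with the given $b$. Both satisfy $bf=f'=(\widetilde b\iota)f$, so $(b-\widetilde b\iota)f=0$. The long exact sequence (Proposition 2.3) applied to $\Hom(-,L')$ on the top $\mathbb{E}$-triangle then yields $c\colon Y\to L'$ with $b-\widetilde b\iota=cg$. Conjugating the $\mathbb{E}$-triangle just displayed by the automorphism $\bigl(\begin{smallmatrix}1&c\\0&1\end{smallmatrix}\bigr)$ of $L'\oplus Y$ transforms the inflation into $(\widetilde b\iota+cg,\,g)^T=(b,g)^T$ and the deflation into $(g',\,-y-g'c)$; setting $s:=y+g'c$ yields the required $\mathbb{E}$-triangle $L\stackrel{(b,g)^T}{\longrightarrow}L'\oplus Y\stackrel{(g',-s)}{\longrightarrow}Y'\stackrel{f_{\ast}\delta'}\dashrightarrow$. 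The compatibility $s^{\ast}\delta'=\delta$ then follows from biadditivity: $s^{\ast}\delta'=y^{\ast}\delta'+c^{\ast}(g')^{\ast}\delta'=\delta+c^{\ast}(g')^{\ast}\delta'$, and the second summand vanishes because $\delta'=(1_X)_{\ast}\delta'$ lies in the image of the connecting map $\Hom(X,X)\to\mathbb{E}(Y',X)$, hence in the kernel of $(g')^{\ast}$.

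The dual statement is proved by the entirely dual argument, producing $s\colon X\to X'$ with $s_{\ast}\delta=\delta'$ and an $\mathbb{E}$-triangle $X\stackrel{(-s,f)^T}{\longrightarrow}X'\oplus L\stackrel{(f',b)}{\longrightarrow}L'\stackrel{(g')^{\ast}\delta}\dashrightarrow$. The main conceptual obstacle is that the base change construction naturally produces some $\widetilde b\iota$ into $L'$ rather than the prescribed $b$; matching these via the unipotent correction is the delicate step. Once the factorization $b-\widetilde b\iota=cg$ is secured from the long exact sequence, the rest reduces to formal manipulation of $2\times 2$ block matrices and the biadditivity of $\mathbb{E}$.
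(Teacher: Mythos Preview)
Your proof is correct and follows essentially the same strategy as the paper's: produce an intermediate $\mathbb{E}$-triangle of the shape $L\to L'\oplus Y\to Y'$ whose $Y$-component is already $g$ but whose $L'$-component is not yet $b$, then conjugate by a unipotent automorphism of $L'\oplus Y$ to replace that component by $b$, and finally verify $s^{\ast}\delta'=\delta$ using the vanishing $(g')^{\ast}\delta'=0$. The only cosmetic difference is how the intermediate triangle is obtained: the paper applies \cite[Proposition~3.15]{Na} directly to the two $\mathbb{E}$-triangles sharing $X$, getting $L\stackrel{(l,g)^T}{\longrightarrow}L'\oplus Y\stackrel{(g',k)}{\longrightarrow}Y'$ in one step, whereas you first pull back the bottom row along $y$ and then identify $\widetilde L\cong L$; the resulting corrections ($b-l=hg$ versus $b-\widetilde b\iota=cg$) and the subsequent matrix manipulation are formally identical.
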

\begin{proof} By \cite[Proposition 3.15]{Na}, we have the commutative diagram of $\mathbb{E}$-triangles
$$\xymatrix{
  X \ar[d]^{f'} \ar[r]^-{f} & L \ar[d]^-{(l~g)^T} \ar[r]^{g} & Y \ar@{=}[d]\ar@{-->}[r]^{\delta}& \\
  L'\ar[r]^-{(1~0)^T}\ar[d]^{g'} & L'\oplus Y \ar^-{(g'~k)}[d] \ar[r]^{~~(0~1)} &Y \ar@{-->}[r]^{0} &\\
   Y'\ar@{=}[r]\ar@{-->}[d]^{\delta'}& Y' \ar@{-->}[d]^{} & & \\
  &&& }
$$
satisfying $(g'~k)^{\ast}\delta'+(0,1)^{\ast}\delta=0$, in which we have assumed that the middle row is of the form
$L'\stackrel{(1~0)^{T}}{\longrightarrow}L'\oplus Y\stackrel{(0~1)}{\longrightarrow}Y\stackrel{0}\dashrightarrow,$
since $f'_{\ast}\delta=f'_{\ast}y^{\ast}\delta'=y^{\ast}(f'_{\ast}\delta')=0$.

Since $bf=f'=lf$, there exists a morphism $h:Y\rightarrow L'$ such that $b-l=hg$. Thus, we have the following commutative diagram with $s=g'h-k$
$$ \xymatrix{
   L\ar@{=}[d] \ar[r]^-{(b~g)^{T}} & L'\oplus Y\ar[d]^{\tiny\begin{pmatrix} 1&-h\\0&1\end{pmatrix}} \ar[r]^-{(g'~-s)} & Y\ar@{=}[d]     \\
   L \ar[r]_-{(l~g)^{T}} & L'\oplus Y \ar[r]_-{(g'~k)} & Y .}$$
Then we obtain that $L\stackrel{(b~g)^T}{\longrightarrow}L'\oplus Y\stackrel{(g'~-s)}{\longrightarrow}Y'\stackrel{f_{\ast}\delta'}\dashrightarrow$
is an $\mathbb{E}$-triangle, which also implies that $sg=g'b$.  
Note that $s^{\ast}\delta'=(g'h-k)^{\ast}\delta'=h^{\ast}(g'^{\ast}\delta')-k^{\ast}\delta'=-k^{\ast}\delta'$ and
$k^{\ast}\delta'+\delta={(0~1)^{T}}^{\ast}((g'~k)^{\ast}\delta'+(0,1)^{\ast}\delta)=0,$ we get that $s^{\ast}\delta'=\delta$.
So, we complete the proof.
\end{proof}

\begin{lemma}\label{Q} Let
$$L'\stackrel{(f~g)^{T}}{\longrightarrow}X\oplus M\stackrel{(f'~g')}{\longrightarrow}L\stackrel{}\dashrightarrow $$
be an $\mathbb{E}$-triangle. Then $(f,g)\in{_{Z}(L',X)}\times{(L',M)_{Y}}$ if and only if $(f',g')\in(X,L)_{Y}\times{_{Z}(M,L)}$.
\end{lemma}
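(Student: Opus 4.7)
The key observation is that Lemma \ref{Q} is essentially the $3 \times 3$-lemma for extriangulated categories, applied to the direct-sum decomposition determined by the inflation $(f,g)^T$ and the deflation $(f',g')$. The strategy, modelled on the proof of Lemma \ref{Zh}, is to invoke axiom (ET4) (for the forward direction) or (ET4)$^{\mathrm{op}}$ (for the reverse direction) together with Lemma \ref{s1} to extract the required $\mathbb{E}$-triangles from a $3 \times 3$ commutative diagram. Both implications are formally dual, so I sketch only the forward one in detail.

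For the forward implication, assume $(f,g) \in {_{Z}(L',X)} \times (L',M)_{Y}$, so that there are $\mathbb{E}$-triangles $Z \xrightarrow{a} L' \xrightarrow{f} X \dashrightarrow$ and $L' \xrightarrow{g} M \xrightarrow{b} Y \dashrightarrow$. Apply axiom (ET4) to the composable pair of inflations $Z \xrightarrow{a} L' \xrightarrow{g} M$: this produces an object $E$, $\mathbb{E}$-triangles $Z \to M \xrightarrow{h'} E \dashrightarrow$ and $X \xrightarrow{d} E \xrightarrow{e} Y \dashrightarrow$, and a $3 \times 3$ commutative diagram of shape \eqref{2.1} whose upper-right square yields $df = h'g$. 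The upper half of this diagram forms a morphism of $\mathbb{E}$-triangles with identity on $Z$, to which Lemma \ref{s1} applies directly. Its conclusion, after the harmless swap $M \oplus X \cong X \oplus M$, yields an $\mathbb{E}$-triangle
$$L' \xrightarrow{(f,g)^T} X \oplus M \xrightarrow{(d',h')} E \dashrightarrow$$
for some $d' : X \to E$ differing from $d$ only by an automorphism of $E$ absorbing the signs produced by the matrix calculation in the proof of Lemma \ref{s1}.

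Since this auxiliary $\mathbb{E}$-triangle and the $\mathbb{E}$-triangle in the statement share the same inflation $(f,g)^T$, uniqueness of the cone up to isomorphism (via axiom (ET3) together with the five-lemma-type assertion \cite[Corollary 3.6]{Na}) furnishes an isomorphism $\varphi : E \xrightarrow{\sim} L$. Transporting the right column of the (ET4) diagram through $\varphi$ yields an $\mathbb{E}$-triangle $X \xrightarrow{f'} L \to Y \dashrightarrow$, so $f' \in (X,L)_{Y}$; transporting the middle row yields $Z \to M \xrightarrow{g'} L \dashrightarrow$, so $g' \in {_{Z}(M,L)}$. The reverse implication is entirely dual: starting from $\mathbb{E}$-triangles $X \xrightarrow{f'} L \xrightarrow{e} Y \dashrightarrow$ and $Z \to M \xrightarrow{g'} L \dashrightarrow$, apply (ET4)$^{\mathrm{op}}$ to the composable deflations $M \xrightarrow{g'} L \xrightarrow{e} Y$, invoke the dual of Lemma \ref{s1}, and identify the resulting cocone with $L'$ via uniqueness of the cocone of the given deflation $(f',g')$. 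The main obstacle -- and the reason Lemma \ref{s1} is indispensable -- is the accurate bookkeeping of signs and matrix entries when manipulating direct-sum terms; once this is done, the identification of the intermediate object with $L$ (respectively $L'$) and the extraction of the claimed $\mathbb{E}$-triangles is a routine diagram chase of the same flavour as in the proof of Lemma \ref{Zh}.
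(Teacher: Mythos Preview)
Your overall strategy---apply (ET4) to build the $3\times 3$ diagram, use Lemma~\ref{s1} to manufacture an $\mathbb{E}$-triangle sharing the inflation $(f,g)^T$, then invoke uniqueness of the cone via \cite[Corollary~3.6]{Na}---is exactly the paper's. The gap lies in your assertion that the $X$-component $d'$ of the deflation produced by Lemma~\ref{s1} ``differs from $d$ only by an automorphism of $E$''. If you trace through the proof of Lemma~\ref{s1}, the output $\mathbb{E}$-triangle (after the swap $M\oplus X\cong X\oplus M$) is $L'\xrightarrow{(f,g)^T}X\oplus M\xrightarrow{(-s,\,h')}E$, where $h'$ is indeed the deflation from the middle row of the (ET4) diagram, but $s$ is a freshly constructed morphism (of the form $h'h-k$ for auxiliary $h,k$) bearing no stated relation to the morphism $d$ coming from the right column. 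Your single isomorphism $\varphi$ therefore yields $g'=\varphi^{-1}h'$, whence $\cocone(g')\cong Z$ as desired, but it only gives $f'=-\varphi^{-1}s$, and nothing so far tells you that $s$ is an inflation with $\cone(s)\cong Y$. Transporting the right column $X\xrightarrow{d}E\to Y$ through $\varphi$ produces $\varphi^{-1}d$, not $f'$.

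The paper closes precisely this gap by a \emph{second}, independent comparison: it invokes \cite[Proposition~1.20]{LN} (a companion to Lemma~\ref{s1}, applied with the roles of the two columns of the (ET4) diagram exchanged) to obtain another auxiliary $\mathbb{E}$-triangle $L'\xrightarrow{(-f,g)^T}X\oplus M\xrightarrow{(f'',\,t)}L''$, in which the $X$-component of the deflation is now the known morphism $f''$ with $\cone(f'')=Y$. After absorbing the sign via the automorphism $\left(\begin{smallmatrix}-1&0\\0&1\end{smallmatrix}\right)$ of $X\oplus M$, a separate isomorphism $l':L\cong L''$ then yields $\cone(f')\cong Y$. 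Two different isomorphisms $L\cong L''$ are used, one per component; they need not agree, but this is harmless since the two conclusions are extracted independently.
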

\begin{proof} Assume that $(f,g)\in{_{Z}(L',X)}\times{(L',M)_{Y}}$. By $\rm(ET4)$, there exists a commutative diagram of $\mathbb{E}$-triangles
$$\xymatrix{
  Z \ar@{=}[d] \ar[r] & L' \ar[d]^{g} \ar[r]^{f} & X \ar[d]^{f''}  \\
  Z  \ar[r] & M \ar[d] \ar[r]^{g''} & L'' \ar[d] \\
  & Y  \ar@{=}[r] & Y.   }
$$
By Lemma \ref{s1}, there exists a morphism $s:X\rightarrow L''$ such that
$$L'\stackrel{(f~g)^{T}}{\longrightarrow}X\oplus M\stackrel{(-s~g'')}{\longrightarrow}L''\stackrel{}\dashrightarrow $$
is an $\mathbb{E}$-triangle. Hence, by \cite[Corollary 3.6]{Na},  we have the commutative diagram
\begin{equation*}\label{act}
\xymatrix{
 &  L'\ar@{=}[d] \ar[r]^-{(f~g)^{T}} & X\oplus M\ar@{=}[d] \ar[r]^-{(f'~g')} & L\ar@{-->}[d]^{l} \ar@{-->}[r]^-{\delta}   &  \\
 &  L' \ar[r]^-{(f~g)^{T}} & X\oplus M \ar[r]^-{(-s~g'')} &  L'' \ar@{-->}[r]^-{} &  }
\end{equation*}
with $l$ being an isomorphism. It follows that $g'=l^{-1}g''$ and $\cocone(g')\cong\cocone(g'')=Z$. Similarly, by \cite[Proposition 1.20]{LN}, there exists a morphism $t:M\rightarrow L''$ such that

$$L'\stackrel{(-f~g)^{T}}{\longrightarrow}X\oplus M\stackrel{(f''~t)}{\longrightarrow}L''\stackrel{}\dashrightarrow $$
is an $\mathbb{E}$-triangle. We have the commutative diagram
\begin{equation*}\label{act}
\xymatrix{
 &  L'\ar@{=}[d] \ar[r]^-{(f~g)^{T}} & X\oplus M\ar[d]^-{\tiny\begin{pmatrix} -1&0\\0&1\end{pmatrix}} \ar[r]^-{(f'~g')} & L\ar@{-->}[d]^{l'} \ar@{-->}[r]^-{\delta}   &  \\
 &  L' \ar[r]^-{(-f~g)^{T}} & X\oplus M \ar[r]^-{(f''~t)} &  L'' \ar@{-->}[r]^-{} &  }
\end{equation*}
with $l'$ being an isomorphism. It follows that $f'=l'^{-1}f''$ and $\cone(f')\cong\cone(f'')=Y$. Dually, if  $(f',g')\in(X,L)_{Y}\times{_{Z}(M,L)}$, we also have that $(f,g)\in{_{Z}(L',X)}\times{(L',M)_{Y}}$.
\end{proof}

By Lemma \ref{Q}, we set
\begin{align*}
W(L',X\oplus M,L)_{ZY}&=\{((f~g)^{T},(f'~g'),\delta)\in W_{L'L}^{X\oplus M}~|~(f,g)\in{_{Z}(L',X)}\times{(L',M)_{Y}}\}\\
&=\{((f~g)^{T},(f'~g'),\delta)\in W_{L'L}^{X\oplus M}~|~(f',g')\in(X,L)_{Y}\times{_{Z}(M,L)}\}.
\end{align*}
The actions of $\Aut L$ and $\Aut L'$ on $W(L',X\oplus M,L)_{ZY}$ have the orbit sets denoted by
$$V(L',X\oplus M,\overline{L})_{ZY}=\{((f~g)^{T},(f'~g'),\delta)_{L}~|~((f~g)^{T},(f'~g'),\delta)\in W(L',X\oplus M,L)_{ZY}\}$$
and
$$V(\overline{L'},X\oplus M,L)_{ZY}=\{((f~g)^{T},(f'~g'),\delta)_{L'}~|~((f~g)^{T},(f'~g'),\delta)\in W(L',X\oplus M,L)_{ZY}\}.$$

\begin{lemma}\label{Wa} The maps
$$\varphi_{1}:V(L',X\oplus M,\overline{L})_{ZY}\rightarrow{_{Z}(L',X\oplus M)_{Y}^{L}};~((f~g)^{T},(f'~g'),\delta)_{L}\mapsto (f~g)^{T}$$
and
$$\varphi_{2}:V(\overline{L'},X\oplus M,L)_{ZY}\rightarrow{_{Z}^{L'}}(X\oplus M,L)_{Y};~((f~g)^{T},(f'~g'),\delta)_{L'}\mapsto (f'~g')$$
are  bijections.
\end{lemma}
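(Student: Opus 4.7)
The plan is to adapt the template of Lemma~\ref{11}. First, I verify that $\varphi_{1}$ and $\varphi_{2}$ are well-defined on orbits. From the general formula $\phi_{xly}((f,g,\delta))=(lfx^{-1},ygl^{-1},(y^{-1})^{\ast}x_{\ast}\delta)$ (with $L$ playing the role of $Y$ and $L'$ the role of $X$ in the pattern $W_{XY}^{L}$), the $\Aut L$-action on $W(L',X\oplus M,L)_{ZY}$ sends $((f~g)^{T},(f'~g'),\delta)$ to $((f~g)^{T},l(f'~g'),(l^{-1})^{\ast}\delta)$, leaving the inflation $(f~g)^{T}$ fixed; dually, the $\Aut L'$-action leaves the deflation $(f'~g')$ fixed. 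Hence $\varphi_{1}$ and $\varphi_{2}$ descend to the claimed orbit sets.

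For surjectivity of $\varphi_{1}$, I start with an arbitrary $(f~g)^{T}\in{_{Z}(L',X\oplus M)_{Y}^{L}}$. Since $\cone((f~g)^{T})\cong L$, I choose such an isomorphism and post-compose the realized deflation with it to produce an $\mathbb{E}$-triangle $L'\stackrel{(f~g)^{T}}{\longrightarrow}X\oplus M\stackrel{(f'~g')}{\longrightarrow}L\stackrel{\delta}\dashrightarrow$. Lemma~\ref{Q} then guarantees $((f~g)^{T},(f'~g'),\delta)\in W(L',X\oplus M,L)_{ZY}$, so its $\Aut L$-orbit is a preimage of $(f~g)^{T}$. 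Surjectivity of $\varphi_{2}$ is analogous: given $(f'~g')\in{_{Z}^{L'}}(X\oplus M,L)_{Y}$, completion to an $\mathbb{E}$-triangle via the isomorphism $\cocone((f'~g'))\cong L'$ produces the inflation, and Lemma~\ref{Q} again certifies membership in $W(L',X\oplus M,L)_{ZY}$.

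For injectivity of $\varphi_{1}$, suppose two triples $((f~g)^{T},(f'~g'),\delta)$ and $((f~g)^{T},(f_{1}'~g_{1}'),\delta_{1})$ share a common first component. Applying $\rm(ET3)$ to the identity square on $(1_{L'},1_{X\oplus M})$ between the two $\mathbb{E}$-triangles produces a morphism $y\colon L\to L$ such that $(1_{L'},1_{X\oplus M},y)$ realizes a morphism of extensions $\delta\to\delta_{1}$. By \cite[Corollary 3.6]{Na}, $y$ is an isomorphism, so $(f_{1}'~g_{1}')=y(f'~g')$ and $\delta_{1}=(y^{-1})^{\ast}\delta$, placing the two triples in the same $\Aut L$-orbit. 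Injectivity of $\varphi_{2}$ is the dual argument, using $\rm(ET3)^{op}$ together with \cite[Corollary 3.6]{Na} to extract an automorphism of $L'$.

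The only real obstacle is the short-five-lemma-type upgrade converting an $\rm(ET3)$-produced morphism into an automorphism; but this step is already packaged in \cite[Corollary 3.6]{Na} and is precisely the tool invoked in Lemma~\ref{11}. The remaining work is bookkeeping: matching each orbit convention with the component that the action fixes, and applying Lemma~\ref{Q} in the correct direction for $\varphi_{1}$ and $\varphi_{2}$ respectively.
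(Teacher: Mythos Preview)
Your proposal is correct and takes essentially the same approach as the paper, whose proof consists only of the sentence ``The proof is similar to Lemma~\ref{11}.'' You have faithfully unpacked that template: well-definedness via the orbit action formula, injectivity via $\rm(ET3)$/$\rm(ET3)^{op}$ together with \cite[Corollary 3.6]{Na}, and surjectivity by completing to an $\mathbb{E}$-triangle. One minor remark: for the surjectivity of $\varphi_{1}$ you do not actually need Lemma~\ref{Q}, since the first of the two equivalent descriptions of $W(L',X\oplus M,L)_{ZY}$ already requires only $(f,g)\in{_{Z}(L',X)}\times(L',M)_{Y}$, which is immediate from $(f~g)^{T}\in{_{Z}(L',X\oplus M)_{Y}^{L}}$; Lemma~\ref{Q} (via the second description) is genuinely needed for $\varphi_{2}$, as you indicate.
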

\begin{proof} The proof is similar to Lemma \ref{11}.
\end{proof}

\begin{proposition}\label{fin} We have that
$$\frac{|{_{Z}^{L'}}(X\oplus M,L)_{Y}|}{|\Aut L|}\frac{[X\oplus M,L]}{[L,L]}=\frac{|{_{Z}(L',X\oplus M)_{Y}^{L}}|}{|\Aut L'|}\frac{[L',X\oplus M]}{[L',L']}.$$
\end{proposition}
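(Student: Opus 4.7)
The plan is to recognize Proposition \ref{fin} as a direct application of Lemma \ref{TT} to the subset $\mathcal{X}:=W(L',X\oplus M,L)_{ZY}$ of $W_{L'L}^{X\oplus M}$. Under the substitution ``the Lemma's $X\leftarrow L'$, its $Y\leftarrow L$, and its middle object $L\leftarrow X\oplus M$'', the two group actions required by Lemma \ref{TT} become those of $\Aut L'$ and $\Aut L$ acting on $W_{L'L}^{X\oplus M}$, so everything lines up structurally.

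First I would verify that $\mathcal{X}$ is indeed stable under each of these two actions, which Lemma \ref{TT} tacitly requires. Under $\phi_{x11}$ with $x\in\Aut L'$, the component $(f'~g')$ is untouched, so the defining conditions $f'\in(X,L)_Y$ and $g'\in{_Z(M,L)}$ trivially persist. Under $\phi_{11y}$ with $y\in\Aut L$, the component $(f'~g')$ is replaced by $(yf'~yg')$, and since post-composition with an automorphism of $L$ yields an equivalent $\mathbb{E}$-triangle via \cite[Proposition 3.7]{Na}, one has $\cone(yf')\cong\cone(f')\cong Y$ and $\cocone(yg')\cong\cocone(g')\cong Z$, so the conditions survive here too.

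Once closure is established, the orbit sets $\mathcal{X}_{L'}$ and $\mathcal{X}_L$ coincide by definition with $V(\overline{L'},X\oplus M,L)_{ZY}$ and $V(L',X\oplus M,\overline{L})_{ZY}$ respectively, and these sets are in bijection with ${_Z^{L'}}(X\oplus M,L)_Y$ and ${_Z(L',X\oplus M)_Y^L}$ via Lemma \ref{Wa}. Substituting those cardinalities into the formula delivered by Lemma \ref{TT} produces exactly the stated identity. The only place where anything can really go wrong is in the bookkeeping—correctly matching the group-action conventions and identifying which orbit set plays the role of $\mathcal{X}_X$ versus $\mathcal{X}_Y$ in the Lemma. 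No genuinely new structural input is needed beyond Lemmas \ref{TT} and \ref{Wa}, so once one is careful with indices the proposition drops out immediately.
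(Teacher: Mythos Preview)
Your proposal is correct and follows exactly the paper's own proof, which simply cites Lemma \ref{TT} and Lemma \ref{Wa}. You even supply the stability check for $\mathcal{X}$ under the two group actions that the paper leaves implicit, and your identification of orbit sets with the sets in Lemma \ref{Wa} is spot on.
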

\begin{proof} It is proved by Lemma \ref{TT} and Lemma \ref{Wa}.
\end{proof}
Now we are in the position to prove Theorem \ref{main3}.

\textbf{{Proof of Theorem \ref{main3}.}} We need to prove that
$$u_{[Z]}\ast (u_{[X]}\ast u_{[Y]})=(u_{[Z]}\ast u_{[X]})\ast u_{[Y]}$$
for any $X,Y,Z\in\mathscr{C}$. By definition, it is equivalent to proving that
$$\sum_{[L]}F_{XY}^{L}F_{ZL}^{M}=\sum_{[L']}F_{ZX}^{L'}F_{L'Y}^{M}$$
for $M\in\mathscr{C}$. By Lemma \ref{Zh},
\begin{align*}
\sum_{[L]}F_{XY}^{L}F_{ZL}^{M}&=\sum_{[L]}\frac{|(X,L)_{Y}|}{|\Aut X|}\frac{[X,L]}{[X,X]}\frac{|_{Z}(M,L)|}{|\Aut L|}\frac{[M,L]}{[L,L]}\\
&=\frac{1}{|\Aut X|[X,X]}\sum_{[L]}\frac{|(X,L)_{Y}||_{Z}(M,L)|}{|\Aut L|}\frac{[X\oplus M,L]}{[L,L]}\\
&=\frac{1}{|\Aut X|[X,X]}\sum_{[L]}\sum_{[L']}\frac{|{_{Z}^{L'}}(X\oplus M,L)_{Y}|}{|\Aut L|}\frac{[X\oplus M,L]}{[L,L]}.
\end{align*}
Similarly,
\begin{align*}
\sum_{[L']}F_{ZX}^{L'}F_{L'Y}^{M}&=\sum_{[L']}\frac{|_{Z}(L',X)|}{|\Aut X|}\frac{[L',X]}{[X,X]}\frac{|(L',M)_{Y}|}{|\Aut L'|}\frac{[L',M]}{[L',L']}\\
&=\frac{1}{|\Aut X|[X,X]}\sum_{[L']}\frac{|_{Z}(L',X)||(L',M)_{Y}|}{|\Aut L'|}\frac{[L',X\oplus M]}{[L',L']}\\
&=\frac{1}{|\Aut X|[X,X]}\sum_{[L']}\sum_{[L]}\frac{|{_{Z}}(L',X\oplus M)_{Y}^{L}|}{|\Aut L'|}\frac{[L',X\oplus M]}{[L',L']}.
\end{align*}
By Proposition \ref{fin}, we finish the proof.
\fin\\

\section{Left locally homologically finite cases}
According to \cite{Go}, a morphism $f\in\Hom(X,Y)$ is said to be {\em projective} if $\mathbb{E}(f,-)=f^*$ is zero. Dually, a morphism $f\in\Hom(X,Y)$ is {\em injective} if $\mathbb{E}(-,f)=f_*$ is zero. A projective morphism in $\mathscr{C}$ which is also a deflation is called a {\em projective
deflation}. We say that $\mathscr{C}$ {\em has enough projective morphisms} if any $C\in\mathscr{C}$ admits a projective
deflation $G\stackrel{g}{\longrightarrow}C$. Dually, we define that $\mathscr{C}$ {\em has enough injective morphisms}.
We remark that, if $\mathscr{C}$ has enough projective (resp. injective) objects, then it has enough projective (resp. injective) morphisms (cf. \cite[Remark 2.11]{Go}).

In \cite{Go}, for $n>0$, two versions $\mathbb{E}^{-n}_{\rm{\RNum{1}}}$ and $\mathbb{E}^{-n}_{\rm{\RNum{2}}}$ of the higher negative extensions in $\mathscr{C}$ have been defined for the extriangulated categories having enough projective morphisms and enough injective morphisms, respectively. We should remark that
for $n\geq0$, set $\mathbb{E}^{n}_{\rm{\RNum{1}}}=\mathbb{E}^{n}_{\rm{\RNum{2}}}=\mathbb{E}^{n}$.

In this section, we assume that $\mathscr{C}$  satisfies the following conditions:

$(1)$ $\mathscr{C}$ has enough projective morphisms and enough injective morphisms.

$(2)$ $\dim_{k}\mathbb{E}^{i}_{\rm{\RNum{1}}}(X,Y)<\infty$ and $\dim_{k}\mathbb{E}^{i}_{\rm{\RNum{2}}}(X,Y)<\infty$  for any $X,Y\in\mathscr{C}$ and $i\leq0$. 

$(3)$ $\mathscr{C}$ is {\em left locally homologically finite}, i.e., for any $X,Y\in\mathscr{C}$,
$$\sum_{i\leq0}\dim_{k}\mathbb{E}^{i}_{_{\rm{\RNum{1}}}}(X,Y)<\infty~\text{and}~\sum_{i\leq0}\dim_{k}\mathbb{E}^{i}_{_{\rm{\RNum{2}}}}(X,Y)<\infty.$$

We remark that for $n>0$, the bifunctors $\mathbb{E}^{-n}_{\rm{\RNum{1}}}$ and $\mathbb{E}^{-n}_{\rm{\RNum{2}}}$ are not isomorphic (cf. \cite[Section 5.4]{Go}).

\begin{proposition}$($\cite{Go}$)$\label{exact}
Let $A\stackrel{}{\longrightarrow}B\stackrel{}{\longrightarrow}C\stackrel{}\dashrightarrow$ be an $\mathbb{E}$-triangle in $\mathscr{C}$.

$(1)$ We have a long exact sequence
$$\cdots\rightarrow \mathbb{E}^{-1}_{\rm{\RNum{2}}}(A,-)\rightarrow\mathbb{E}^{0}_{\rm{\RNum{2}}}(C,-)\rightarrow\mathbb{E}^{0}_{\rm{\RNum{2}}}(B,-)\rightarrow\mathbb{E}^{0}_{\rm{\RNum{2}}}(A,-)\rightarrow\mathbb{E}_{\rm{\RNum{2}}}^{1}(C,-)\rightarrow\cdots,$$

$(2)$ Dually, we have a long exact sequence
$$\cdots\rightarrow \mathbb{E}^{-1}_{\rm{\RNum{1}}}(-,C)\rightarrow\mathbb{E}^{0}_{\rm{\RNum{1}}}(-,A)\rightarrow\mathbb{E}^{0}_{\rm{\RNum{1}}}(-,B)\rightarrow\mathbb{E}^{0}_{\rm{\RNum{1}}}(-,C)\rightarrow\mathbb{E}^{1}_{\rm{\RNum{1}}}(-,A)\rightarrow\cdots.$$
\end{proposition}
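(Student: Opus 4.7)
The plan is to reduce the claim to the positive-degree statement already recorded in Proposition~2.4 and to handle the negative tail by a dimension-shifting argument. Since $\mathbb{E}^n_{\rm{\RNum{1}}}=\mathbb{E}^n_{\rm{\RNum{2}}}=\mathbb{E}^n$ for $n\geq 0$, the portion of each sequence from $\mathbb{E}^0$ onward coincides with the long exact sequence of Proposition~2.4, so only the terms $\mathbb{E}^{-n}_{\rm{\RNum{1}}}$ and $\mathbb{E}^{-n}_{\rm{\RNum{2}}}$ with $n>0$ require genuinely new input.

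For part~(1), I would use the hypothesis that $\mathscr{C}$ has enough projective morphisms to iteratively choose projective deflations assembling into projective-morphism resolutions of $A$, $B$ and $C$. The delicate point is to arrange these resolutions \emph{compatibly}, so that they give a short exact sequence of cochain complexes upon applying $\Hom(-,Y)$. This is an extriangulated analogue of the horseshoe lemma: starting from projective deflations $G\twoheadrightarrow A$ and $H\twoheadrightarrow C$, one uses $\rm(ET4)^{op}$ together with the lifting property of projective morphisms to produce a projective deflation $G\oplus H\twoheadrightarrow B$ fitting into a $3\times 3$ diagram of $\mathbb{E}$-triangles. Iterating the construction and then applying $\Hom(-,Y)$, the snake lemma delivers the negative portion of the long exact sequence, in view of the definition of $\mathbb{E}^{-n}_{\rm{\RNum{2}}}(-,Y)$ from \cite{Go} as the cohomology of the resulting complex. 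Splicing this with Proposition~2.4 requires checking that the connecting homomorphism $\mathbb{E}^0_{\rm{\RNum{2}}}(A,-)\to\mathbb{E}^1_{\rm{\RNum{2}}}(C,-)$ agrees with the one in Proposition~2.4, which follows from naturality of the resolution construction.

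Part~(2) is obtained by the dual argument, replacing projective morphisms by injective morphisms and $\rm(ET4)^{op}$ by $\rm(ET4)$. The main obstacle in both cases is the extriangulated horseshoe construction itself, since the category lacks the convenient pushout/pullback formalism of an abelian or exact category; every diagram completion has to be carried out using only the axioms $\rm(ET3)$, $\rm(ET3)^{op}$, $\rm(ET4)$, $\rm(ET4)^{op}$ together with the lifting property of projective (respectively injective) morphisms. Once the compatible-resolution construction is established, the remainder of the argument is entirely formal, and the full statement is exactly what is proved in \cite{Go}, to which we may therefore appeal directly.
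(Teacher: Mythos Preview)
The paper does not give a proof of this proposition at all; it is stated with an explicit citation to \cite{Go} and used as input. Your proposal therefore already goes beyond what the paper does, and since your closing sentence also appeals directly to \cite{Go}, you end up in the same place as the paper.

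One point to flag in your sketch: according to the paper's own summary (just above the proposition), $\mathbb{E}^{-n}_{\rm{\RNum{1}}}$ is the version defined when $\mathscr{C}$ has enough \emph{projective} morphisms, while $\mathbb{E}^{-n}_{\rm{\RNum{2}}}$ is the version requiring enough \emph{injective} morphisms. Your outline assigns projective deflations and the horseshoe via $\rm(ET4)^{op}$ to part~(1), which is the sequence for $\mathbb{E}^{-n}_{\rm{\RNum{2}}}$, and injective morphisms with $\rm(ET4)$ to part~(2), which is the sequence for $\mathbb{E}^{-n}_{\rm{\RNum{1}}}$; this appears to have the two versions interchanged. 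Apart from that labelling issue, the dimension-shift-plus-horseshoe strategy you describe is the expected shape of such an argument, and deferring to \cite{Go} for the details matches exactly what the paper does.
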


Given $X,Y\in\mathscr{C}$, we set
$$\{X,Y\}_{d}:=\prod_{i>0}|\mathbb{E}_{d}^{-i}(X,Y)|^{(-1)^{i}}$$
where $d\in\{\rm{\RNum{1}},\rm{\RNum{2}}\}$.
Since $\mathscr{C}$ is left locally homologically finite, we have that $\{X,Y\}_{d}<\infty$.

\begin{lemma}\label{5.1}Let $(f,g,\delta)\in W_{XY}^{L}$.

$(1)$ We have that
$$|\Im(g,L)|=|(Y,L)|\frac{\{X,L\}_{\rm{\RNum{2}}}\{Y,L\}_{\rm{\RNum{2}}}}{\{L,L\}_{\rm{\RNum{2}}}}$$
and
$$|\Im(Y,g)|=\frac{|(Y,L)|}{|(Y,X)|}\frac{\{Y,L\}_{\rm{\RNum{1}}}}{\{Y,Y\}_{\rm{\RNum{1}}}\{Y,X\}_{\rm{\RNum{1}}}}.$$

$(2)$ We have that
$$|\Im(L,f)|=|(L,X)|\frac{\{L,Y\}_{\rm{\RNum{1}}}\{L,X\}_{\rm{\RNum{1}}}}{\{L,L\}_{\rm{\RNum{1}}}}$$
and
$$|\Im(f,X)|=\frac{|(L,X)|}{|(Y,X)|}\frac{\{L,X\}_{\rm{\RNum{2}}}}{\{X,X\}_{\rm{\RNum{2}}}\{Y,X\}_{\rm{\RNum{2}}}}.$$
\end{lemma}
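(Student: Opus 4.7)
The plan is to mimic the proof of Lemma \ref{1} verbatim, with the crucial upgrade that we now use the left-extended long exact sequences from Proposition \ref{exact} in place of the right-extended ones. For the first identity of (1), I apply $\mathbb{E}^{*}_{\rm{\RNum{2}}}(-,L)$ to the $\mathbb{E}$-triangle $X\xrightarrow{f} L\xrightarrow{g} Y\dashrightarrow$, obtaining
$$\cdots\to\mathbb{E}^{-1}_{\rm{\RNum{2}}}(L,L)\to\mathbb{E}^{-1}_{\rm{\RNum{2}}}(X,L)\to(Y,L)\xrightarrow{(g,L)}(L,L)\to(X,L)\to\cdots.$$
Then $|\Im(g,L)|=|(Y,L)|/|\Ker(g,L)|$, and $|\Ker(g,L)|$ equals the image of $\mathbb{E}^{-1}_{\rm{\RNum{2}}}(X,L)\to(Y,L)$, which in turn is $|\mathbb{E}^{-1}_{\rm{\RNum{2}}}(X,L)|$ divided by the image of the next map, and so on.

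Iterating the identity $|\Im|=|\text{domain}|/|\text{previous image}|$ back along the sequence produces an infinite alternating product in the groups $\mathbb{E}^{-i}_{\rm{\RNum{2}}}(X,L),\mathbb{E}^{-i}_{\rm{\RNum{2}}}(L,L),\mathbb{E}^{-i}_{\rm{\RNum{2}}}(Y,L)$, which by the left locally homologically finite hypothesis converges. A bookkeeping check of the $(-1)^i$ exponents along the three-periodic cycle $(\text{domain on }X,L)\to(\text{domain on }L,L)\to(\text{domain on }Y,L)$ gives
$$|\Ker(g,L)|=\frac{\{L,L\}_{\rm{\RNum{2}}}}{\{X,L\}_{\rm{\RNum{2}}}\{Y,L\}_{\rm{\RNum{2}}}},$$
which yields the asserted formula for $|\Im(g,L)|$. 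For the second identity of (1), I instead apply $\mathbb{E}^{*}_{\rm{\RNum{1}}}(Y,-)$ using Proposition \ref{exact}(2), obtaining
$$\cdots\to\mathbb{E}^{-1}_{\rm{\RNum{1}}}(Y,Y)\to(Y,X)\xrightarrow{(Y,f)}(Y,L)\xrightarrow{(Y,g)}(Y,Y)\to\mathbb{E}^{1}_{\rm{\RNum{1}}}(Y,X)\to\cdots,$$
and perform the same telescoping; here $|(Y,X)|$ appears explicitly in the denominator because it is the first finite hom-group encountered one step to the left of $(Y,L)$ in the cycle, which accounts for the shape of the stated formula. Part (2) is then proved by the obvious dualization: apply $\mathbb{E}^{*}_{\rm{\RNum{1}}}(-,X)$ for the first equality and $\mathbb{E}^{*}_{\rm{\RNum{2}}}(L,-)$ for the second, and repeat the telescoping argument.

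The only real obstacle is bookkeeping rather than mathematical substance. Two points demand care. First, one must select the correct bifunctor ($\mathbb{E}_{\rm{\RNum{1}}}$ versus $\mathbb{E}_{\rm{\RNum{2}}}$) for each direction: Proposition \ref{exact}(1) (contravariant in the first slot) forces $\mathbb{E}_{\rm{\RNum{2}}}$ negative extensions, whereas Proposition \ref{exact}(2) (covariant in the second slot) forces $\mathbb{E}_{\rm{\RNum{1}}}$, and the two are genuinely distinct. Second, one must track the phase of each argument along the three-periodic cycle in the long exact sequence, which is the source of the asymmetric $\pm$ pattern in the exponents of $\{L,L\}$ versus $\{X,L\},\{Y,L\}$ (and analogously in parts (2)). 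Convergence of the resulting infinite products is automatic from the left locally homologically finite hypothesis, so no further analytic argument is needed.
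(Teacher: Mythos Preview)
Your approach is correct and is exactly what the paper means by ``similar to Lemma~\ref{1}'': one replaces the right-extended long exact sequences used there by the left-extended ones of Proposition~\ref{exact}, and telescopes backwards instead of forwards. One small slip: in part~(2) you have interchanged the two functors---for $|\Im(L,f)|$ you must apply $\mathbb{E}^{*}_{\rm{\RNum{1}}}(L,-)$ (covariant, Proposition~\ref{exact}(2)), and for $|\Im(f,X)|$ you must apply $\mathbb{E}^{*}_{\rm{\RNum{2}}}(-,X)$ (contravariant, Proposition~\ref{exact}(1)); your choice of label ${\rm\RNum{1}}$ versus ${\rm\RNum{2}}$ is right, but the indicated fixed slot is swapped.
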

\begin{proof} The proof is similar to Lemma \ref{1}.
\end{proof}

\begin{proposition}\label{5.2}Given $X,Y,L\in\mathscr{C}$, we have that
$$\frac{|(X,L)_{Y}|}{|\Aut L|}\frac{\{X,L\}_{\rm{\RNum{2}}}\{Y,L\}_{\rm{\RNum{2}}}}{\{L,L\}_{\rm{\RNum{2}}}}=\frac{|\mathbb{E}(Y,X)_{L}|}{|\Aut Y|}\frac{\{Y,L\}_{\rm{\RNum{1}}}}{\{Y,Y\}_{\rm{\RNum{1}}}|(Y,X)|\{Y,X\}_{\rm{\RNum{1}}}}.$$
\end{proposition}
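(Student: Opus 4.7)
The plan is to retrace the argument of Proposition \ref{main} almost verbatim, substituting the formulas for $|\Im(g,L)|$ and $|\Im(Y,g)|$ supplied by Lemma \ref{5.1} in place of those from Lemma \ref{1}. The key observation is that the orbit-counting machinery assembled in Section 3 (Lemmas \ref{3}, \ref{4}, and \ref{11}, together with the decomposition from \cite[Lemma 2.2]{XF}) is purely combinatorial in the data $|\Im(g,L)|$ and $|\Im(Y,g)|$: it does not care whether these numbers come from the right- or the left-locally-homologically-finite setting, only that they are finite and depend solely on the isomorphism classes $[X],[Y],[L]$. Lemma \ref{5.1} guarantees exactly this.

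Concretely, I would begin with the chain
$$|\Im(g,L)|\,\frac{|V_{X\overline{Y}}^{L}|}{|\Aut L|}=\sum_{(f,g,\delta)\in V_{X\overline{Y}}^{\overline{L}}}\frac{|\End L_{g}|}{|\Aut L_{g}|}=\sum_{(f,g,\delta)\in V_{X\overline{Y}}^{\overline{L}}}\frac{|\End {_{g}Y}|}{|\Aut {_{g}Y}|}=|\Im(Y,g)|\,\frac{|V_{XY}^{\overline{L}}|}{|\Aut Y|}$$
already derived in the proof of Proposition \ref{main}. The outer equalities come from Lemma \ref{4} combined with the $g$-independence of $|\Im(g,L)|$ and $|\Im(Y,g)|$ (which Lemma \ref{5.1} supplies in our setting), while the middle equality follows from the isomorphism $L_{g}\cong {_{g}Y}$ encoded in the decomposition \cite[Lemma 2.2]{XF}.

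Next I would invoke the bijections of Lemma \ref{11} to replace $|V_{X\overline{Y}}^{L}|$ by $|(X,L)_{Y}|$ and $|V_{XY}^{\overline{L}}|$ by $|\mathbb{E}(Y,X)_{L}|$, and then substitute the explicit values
$$|\Im(g,L)|=|(Y,L)|\frac{\{X,L\}_{\rm{\RNum{2}}}\{Y,L\}_{\rm{\RNum{2}}}}{\{L,L\}_{\rm{\RNum{2}}}},\qquad |\Im(Y,g)|=\frac{|(Y,L)|}{|(Y,X)|}\frac{\{Y,L\}_{\rm{\RNum{1}}}}{\{Y,Y\}_{\rm{\RNum{1}}}\{Y,X\}_{\rm{\RNum{1}}}}$$
from Lemma \ref{5.1}. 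Cancelling the common factor $|(Y,L)|$ on the two sides then delivers the stated identity after a trivial rearrangement.

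Since every ingredient is already in place, I anticipate no serious obstacle; the argument is essentially a bookkeeping check that Lemma \ref{5.1} slots cleanly into the chain of equalities obtained in Section 3. The only subtlety worth flagging is the implicit dependence on Proposition \ref{exact} — the negative-range long exact sequences — which powers Lemma \ref{5.1} itself; but that work is discharged there, not in the present proof.
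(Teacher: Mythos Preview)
Your proposal is correct and follows essentially the same route as the paper's own proof: combine Lemma~\ref{4} with Lemma~\ref{11} to obtain $|\Im(g,L)|\,\frac{|(X,L)_{Y}|}{|\Aut L|}=|\Im(Y,g)|\,\frac{|\mathbb{E}(Y,X)_{L}|}{|\Aut Y|}$, then substitute the values from Lemma~\ref{5.1} and cancel $|(Y,L)|$. The paper compresses this into two lines, but your more explicit account of why the orbit-counting chain from Section~3 remains valid here (namely, that the constancy of $|\Im(g,L)|$ and $|\Im(Y,g)|$ over $W_{XY}^L$ is the only input needed, and Lemma~\ref{5.1} supplies it) is a welcome clarification.
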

\begin{proof} By Lemma \ref{4} and Lemma \ref{11}, we have that
$$|\Im(g,L)|\frac{|(X,L)_{Y}|}{|\Aut L|}=|\Im(Y,g)|\frac{|\mathbb{E}(Y,X)_{L}|}{|\Aut Y|}.$$
By Lemma \ref{5.1}, we finish the proof.
\end{proof}
Dually, we also have the following formula.
\begin{proposition}\label{5.3}Given $X,Y,L\in\mathscr{C}$, we have that
$$\frac{|_{X}(L,Y)|}{|\Aut L|}\frac{\{L,Y\}_{\rm{\RNum{1}}}\{L,X\}_{\rm{\RNum{1}}}}{\{L,L\}_{\rm{\RNum{1}}}}=\frac{|\mathbb{E}(Y,X)_{L}|}{|\Aut X|}\frac{\{L,X\}_{\rm{\RNum{2}}}}{\{X,X\}_{\rm{\RNum{2}}}|(Y,X)|\{Y,X\}_{\rm{\RNum{2}}}}.$$
\end{proposition}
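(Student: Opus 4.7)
The plan is to dualize the proof of Proposition 5.2, replacing the use of Lemma 3.6 (Lemma $\mathrm{FZ}$) for Lemma 3.4. Concretely, I first combine the two formulas in Lemma $\mathrm{FZ}$ with the bijections $\varphi_{2}$ and $\varphi_{3}$ of Lemma \ref{11}, which identify $V_{\overline{X}Y}^{L}$ with ${_{X}(L,Y)}$ and $V_{XY}^{\overline{L}}$ with $\mathbb{E}(Y,X)_{L}$. The target is the intermediate identity
$$|\Im(L,f)|\frac{|{_{X}(L,Y)}|}{|\Aut L|}=|\Im(f,X)|\frac{|\mathbb{E}(Y,X)_{L}|}{|\Aut X|},$$
which is the exact dual (under the inflation/deflation swap) of the identity obtained by chaining Lemma \ref{4} and Lemma \ref{11} in the proof of Proposition \ref{5.2}.

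The only point requiring a small observation is that the two sums in Lemma $\mathrm{FZ}$ range over the same orbit set $V_{\overline{X}Y}^{\overline{L}}$, so one needs the summands to match term by term after pulling out $|\Aut L|$, $|\Aut X|$, $|\Im(L,f)|$ and $|\Im(f,X)|$. This matching is provided by the decomposition result \cite[Lemma 2.2]{XF} applied to the inflation $f\colon X\to L$ (rather than to the deflation $g\colon L\to Y$ as in the proof of Proposition \ref{5.2}): the isomorphic piece produced by the decomposition gives $_{f}L\cong X_{f}$, hence $|\End {_{f}L}|/|\Aut {_{f}L}|=|\End X_{f}|/|\Aut X_{f}|$ for every orbit in $V_{\overline{X}Y}^{\overline{L}}$. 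This is the one step where it is important to remember that this is the content of the implicit duality pattern already used in Section 3 and indirectly invoked in the proof of Proposition \ref{S}.

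Once the displayed identity is in hand, the rest is a direct substitution. I plug in the formulas from Lemma \ref{5.1}(2),
$$|\Im(L,f)|=|(L,X)|\frac{\{L,Y\}_{\RNum{1}}\{L,X\}_{\RNum{1}}}{\{L,L\}_{\RNum{1}}},\qquad |\Im(f,X)|=\frac{|(L,X)|}{|(Y,X)|}\frac{\{L,X\}_{\RNum{2}}}{\{X,X\}_{\RNum{2}}\{Y,X\}_{\RNum{2}}},$$
and cancel the common factor $|(L,X)|$ from the two sides. A one-line rearrangement of the $\{-,-\}_{\RNum{1}}$ and $\{-,-\}_{\RNum{2}}$ terms then yields the equality asserted in Proposition \ref{5.3}. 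I do not anticipate any real obstacle: all of the substantive content (the long exact sequences, the orbit-counting bijections, and the decomposition lemma) is already packaged in Lemmas $\mathrm{FZ}$, \ref{11}, and \ref{5.1}, and the remaining manipulation is bookkeeping parallel to the passage from Proposition \ref{main} to Proposition \ref{S} in Section 3.
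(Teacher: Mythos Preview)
Your proposal is correct and follows precisely the approach the paper intends: the paper proves Proposition~\ref{5.3} simply by saying ``dually'', and what you have written is exactly the dual of its proof of Proposition~\ref{5.2}, swapping Lemma~\ref{4} for Lemma~\ref{FZ}, the bijection $\varphi_{1}$ for $\varphi_{3}$, and Lemma~\ref{5.1}(1) for Lemma~\ref{5.1}(2). The key observation you flag---that $_{f}L\cong X_{f}$ so the summands over $V_{\overline{X}Y}^{\overline{L}}$ match term by term---is the same step used (with $L_{g}\cong {_{g}Y}$) in the paper's derivation of the intermediate identity for Proposition~\ref{5.2} and earlier in the proof of Proposition~\ref{main}.
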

Let us state the  main result in
this section in the following, which is a generalization of the case for triangulated categories in \cite{XF}.
\begin{theorem}\label{main5}Given $X,Y,L\in\mathscr{C}$, we have that
$$\frac{|_{X}(L,Y)|}{|\Aut Y|}\frac{\{L,Y\}_{\rm{\RNum{1}}}}{\{Y,Y\}_{\rm{\RNum{1}}}}\hbar_{XY}^{L}=\frac{|(X,L)_{Y}|}{|\Aut X|}\frac{\{X,L\}_{\rm{\RNum{2}}}}{\{X,X\}_{\rm{\RNum{2}}}},$$
where
$$\hbar_{XY}^{L}=\frac{\{Y,X\}_{\rm{\RNum{2}}}}{\{Y,X\}_{\rm{\RNum{1}}}}\frac{\{Y,L\}_{\rm{\RNum{1}}}}{\{Y,L\}_{\rm{\RNum{2}}}}\frac{\{L,X\}_{\rm{\RNum{1}}}}{\{L,X\}_{\rm{\RNum{2}}}}\frac{\{L,L\}_{\rm{\RNum{2}}}}{\{L,L\}_{\rm{\RNum{1}}}}.$$
\end{theorem}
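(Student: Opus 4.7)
The plan is to mirror the proof of Theorem \ref{main2}, with Propositions \ref{5.2} and \ref{5.3} playing the role that Propositions \ref{main} and \ref{S} played in the right-finite case. The key observation is that both propositions place the common quantity $|\mathbb{E}(Y,X)_{L}|$ on one side of an equation whose other factors are automorphism and Hom counts together with several $\{-,-\}_{d}$ symbols, and crucially one of the two arguments of each $\mathbb{E}^{-n}$ is the outer object ($X$ or $Y$), not $L$. Eliminating $|\mathbb{E}(Y,X)_{L}|$ by solving each proposition for it and equating the two expressions should therefore produce a single identity between $|(X,L)_{Y}|$ and $|_{X}(L,Y)|$ up to an explicit ratio of $\{-,-\}_{d}$-factors.

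Concretely, I would first rewrite Proposition \ref{5.2} as an expression for $|\mathbb{E}(Y,X)_{L}|$ in terms of $|(X,L)_{Y}|$, $|\Aut L|$, $|\Aut Y|$, $|(Y,X)|$, and the $\{-,-\}_{d}$ symbols appearing there, and do the same for Proposition \ref{5.3} in terms of $|_{X}(L,Y)|$, $|\Aut L|$, $|\Aut X|$, $|(Y,X)|$, and the corresponding symbols. Equating the two expressions cancels $|\Aut L|$ and $|(Y,X)|$ at once. Rearranging to place $\frac{|(X,L)_{Y}|}{|\Aut X|}\frac{\{X,L\}_{\rm{\RNum{2}}}}{\{X,X\}_{\rm{\RNum{2}}}}$ on one side and $\frac{|_{X}(L,Y)|}{|\Aut Y|}\frac{\{L,Y\}_{\rm{\RNum{1}}}}{\{Y,Y\}_{\rm{\RNum{1}}}}$ on the other, the residual quotient of $\{-,-\}_{d}$-factors should collapse to exactly
$$\frac{\{Y,X\}_{\rm{\RNum{2}}}}{\{Y,X\}_{\rm{\RNum{1}}}}\frac{\{Y,L\}_{\rm{\RNum{1}}}}{\{Y,L\}_{\rm{\RNum{2}}}}\frac{\{L,X\}_{\rm{\RNum{1}}}}{\{L,X\}_{\rm{\RNum{2}}}}\frac{\{L,L\}_{\rm{\RNum{2}}}}{\{L,L\}_{\rm{\RNum{1}}}} = \hbar_{XY}^{L},$$
which yields the desired identity.

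The main obstacle is not conceptual but bookkeeping. The inputs Propositions \ref{5.2} and \ref{5.3} each involve four $\{-,-\}_{d}$-factors, and the indices $d\in\{\rm{\RNum{1}},\rm{\RNum{2}}\}$ attached to each pair $(-,-)$ do not match between the two propositions, reflecting the fact that $\mathbb{E}^{-n}_{\rm{\RNum{1}}}$ and $\mathbb{E}^{-n}_{\rm{\RNum{2}}}$ are genuinely distinct bifunctors (cf.\ \cite[Section 5.4]{Go}). One therefore has to pair up the eight factors carefully so that the four-term ratio $\hbar_{XY}^{L}$ emerges with the correct pattern; the correction factor $\hbar_{XY}^{L}$ is precisely the measurement of this $\rm{\RNum{1}}$-versus-$\rm{\RNum{2}}$ discrepancy. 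Since no new homological or extriangulated-categorical input is required beyond the two long exact sequences of Proposition \ref{exact} that are already packaged inside Propositions \ref{5.2} and \ref{5.3}, the argument should reduce to a short algebraic manipulation exactly parallel to the proof of Theorem \ref{main2}.
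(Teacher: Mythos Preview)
Your proposal is correct and follows essentially the same approach as the paper: the paper likewise solves Propositions \ref{5.2} and \ref{5.3} for the common factor $|\mathbb{E}(Y,X)_{L}|$ (equivalently, isolates $|(X,L)_{Y}|\{X,L\}_{\rm{\RNum{2}}}$ and $|_{X}(L,Y)|\{L,Y\}_{\rm{\RNum{1}}}$), divides to cancel $|\Aut L|$ and $|(Y,X)|$, and reads off $\hbar_{XY}^{L}$ as the residual $\rm{\RNum{1}}$/$\rm{\RNum{2}}$ discrepancy. Your identification of the bookkeeping of the eight $\{-,-\}_{d}$-factors as the only nontrivial point is exactly right.
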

 \begin{proof}  By Proposition \ref{5.2}, we have that
$$|(X,L)_{Y}|\{X,L\}_{\rm{\RNum{2}}}=\frac{|\mathbb{E}(Y,X)_{L}|}{|\Aut Y|}\frac{1}{\{Y,Y\}_{\rm{\RNum{1}}}}\frac{1}{|(Y,X)|\{Y,X\}_{\rm{\RNum{1}}}}\frac{\{Y,L\}_{\rm{\RNum{1}}}}{\{Y,L\}_{\rm{\RNum{2}}}}|\Aut L|\{L,L\}_{\rm{\RNum{2}}}.$$
By Proposition \ref{5.3}, we have that
$$|_{X}(L,Y)|\{L,Y\}_{\rm{\RNum{1}}}=\frac{|\mathbb{E}(Y,X)_{L}|}{|\Aut X|}\frac{1}{\{X,X\}_{\rm{\RNum{2}}}}\frac{1}{|(Y,X)|\{Y,X\}_{\rm{\RNum{2}}}}\frac{\{L,X\}_{\rm{\RNum{2}}}}{\{L,X\}_{\rm{\RNum{1}}}}|\Aut L|\{L,L\}_{\rm{\RNum{1}}}.$$
Thus, we conclude that
$$\frac{|(X,L)_{Y}|\{X,L\}_{\rm{\RNum{2}}}}{|_{X}(L,Y)|\{L,Y\}_{\rm{\RNum{1}}}}=\frac{|\Aut X|\{X,X\}_{\rm{\RNum{2}}}}{|\Aut Y|\{Y,Y\}_{\rm{\RNum{1}}}}\hbar_{XY}^{L}.$$
This finishes the proof.
\end{proof}

Let $\mathscr{C}$ be a triangulated category. By \cite[Proposition 5.4]{Go} and its dual, for $n>0$, we have that $\mathbb{E}^{-n}_{\rm{\RNum{1}}}(X,Y)\cong(X,Y[-n])\cong(X[n],Y)\cong\mathbb{E}^{-n}_{\rm{\RNum{2}}}(X,Y)$. Hence, we have that
$$\{X,Y\}_{\rm{\RNum{1}}}=\{X,Y\}_{\rm{\RNum{2}}},$$ which is denoted by $\{X,Y\}$ in this case.

Recall that in a triangulated category, one denotes by $\Hom(L,Y)_{X[1]}$ the subset of $\Hom(L,Y)$ consisting of morphisms whose cone is isomorphic to $X[1]$.
\begin{corollary}\label{12} $($To\"{e}n's formula$)$ Let $\mathscr{C}$ be a triangulated category. For any $X,Y,L\in\mathscr{C}$, we have that
$$\frac{|(L,Y)_{X[1]}|}{|\Aut Y|}\frac{\{L,Y\}}{\{Y,Y\}}=\frac{|(X,L)_{Y}|}{|\Aut X|}\frac{\{X,L\}}{\{X,X\}}.$$
\end{corollary}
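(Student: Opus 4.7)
The plan is to deduce this corollary directly from Theorem \ref{main5} by specializing to the triangulated setting; there is essentially no new work, just a matching of notation and a collapse of the correcting factor. First I would invoke \cite[Proposition 5.4]{Go} and its dual (as cited immediately before the corollary) to conclude that in a triangulated category, for every $n>0$,
$$\mathbb{E}^{-n}_{\rm{\RNum{1}}}(X,Y)\cong(X,Y[-n])\cong(X[n],Y)\cong\mathbb{E}^{-n}_{\rm{\RNum{2}}}(X,Y),$$
so that $\{X,Y\}_{\rm{\RNum{1}}}=\{X,Y\}_{\rm{\RNum{2}}}$, which the paper denotes simply by $\{X,Y\}$. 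Inspecting the four ratios that make up
$$\hbar_{XY}^{L}=\frac{\{Y,X\}_{\rm{\RNum{2}}}}{\{Y,X\}_{\rm{\RNum{1}}}}\frac{\{Y,L\}_{\rm{\RNum{1}}}}{\{Y,L\}_{\rm{\RNum{2}}}}\frac{\{L,X\}_{\rm{\RNum{1}}}}{\{L,X\}_{\rm{\RNum{2}}}}\frac{\{L,L\}_{\rm{\RNum{2}}}}{\{L,L\}_{\rm{\RNum{1}}}},$$
each numerator equals its denominator, so $\hbar_{XY}^{L}=1$.

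Next I would translate the symbol ${_{X}(L,Y)}$ into triangulated language. By definition, ${_{X}(L,Y)}$ is the set of deflations $g\colon L\to Y$ with $\cocone(g)\cong X$. In a triangulated category, every morphism $g\colon L\to Y$ is a deflation and fits into a distinguished triangle $X\to L\stackrel{g}{\to}Y\to X[1]$, so such a $g$ has cocone isomorphic to $X$ precisely when its cone is isomorphic to $X[1]$. Hence $|{_{X}(L,Y)}|=|(L,Y)_{X[1]}|$.

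Substituting $\hbar_{XY}^{L}=1$, identifying $|{_{X}(L,Y)}|=|(L,Y)_{X[1]}|$, and replacing every occurrence of $\{-,-\}_{\rm{\RNum{1}}}$ and $\{-,-\}_{\rm{\RNum{2}}}$ by $\{-,-\}$ in the formula of Theorem \ref{main5} immediately yields
$$\frac{|(L,Y)_{X[1]}|}{|\Aut Y|}\frac{\{L,Y\}}{\{Y,Y\}}=\frac{|(X,L)_{Y}|}{|\Aut X|}\frac{\{X,L\}}{\{X,X\}},$$
which is To\"{e}n's formula. There is no obstacle of substance here; the only things to be verified are the two bookkeeping assertions above, namely that the correction factor trivializes and that the cocone-$X$ condition coincides with the cone-$X[1]$ condition. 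All other content is carried by Theorem \ref{main5}.
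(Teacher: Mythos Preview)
Your proposal is correct and follows exactly the paper's own approach: the paper's proof consists of the single observation that $|{_{X}(L,Y)}|=|(L,Y)_{X[1]}|$, with the equality $\{X,Y\}_{\rm{\RNum{1}}}=\{X,Y\}_{\rm{\RNum{2}}}$ (hence $\hbar_{XY}^{L}=1$) already recorded in the paragraph preceding the corollary. You have simply spelled out these two bookkeeping steps in more detail.
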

\begin{proof}
In this case, just noting that $|_{X}(L,Y)|=|(L,Y)_{X[1]}|$, we complete the proof.
\end{proof}
Let $\mathscr{C}$ be an exact category. For $d\in\{\rm{\RNum{1}},\rm{\RNum{2}}\}$ and $i<0$, by \cite[Proposition 5.4]{Go} and its dual, we know that $\mathbb{E}^{-i}_{d}(X,Y)=0$. Thus, $\{X,Y\}_{\rm{\RNum{1}}}=\{X,Y\}_{\rm{\RNum{2}}}=1.$
 \begin{corollary}\label{Z} Let $\mathscr{C}$ be an exact category. For any $X,Y,L\in\mathscr{C}$, we have that
$$\frac{|W_{XY}^{L}|}{|\Aut X||\Aut Y|}=\frac{|_{X}(L,Y)|}{|\Aut Y|}=\frac{|(X,L)_{Y}|}{|\Aut X|}.$$
\end{corollary}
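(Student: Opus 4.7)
The plan is to combine two observations: first, that the twisting factor $\hbar_{XY}^L$ of Theorem \ref{main5} trivializes in the exact setting, immediately giving the equality $|_X(L,Y)|/|\Aut Y| = |(X,L)_Y|/|\Aut X|$; and second, that the actions of $\Aut X$ and $\Aut Y$ on $W_{XY}^L$ are free in an exact category, which identifies $|W_{XY}^L|/(|\Aut X||\Aut Y|)$ with these two quantities.

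For the first observation, I would invoke \cite[Proposition 5.4]{Go} and its dual, which tell us that for an exact category the negative higher extensions vanish: $\mathbb{E}^{-n}_{\rm{\RNum{1}}}(X,Y) = \mathbb{E}^{-n}_{\rm{\RNum{2}}}(X,Y) = 0$ for every $n > 0$. Consequently $\{X,Y\}_{\rm{\RNum{1}}} = \{X,Y\}_{\rm{\RNum{2}}} = 1$ for all $X,Y$, and hence $\hbar_{XY}^L = 1$. Theorem \ref{main5} then collapses to
$$\frac{|_X(L,Y)|}{|\Aut Y|} = \frac{|(X,L)_Y|}{|\Aut X|}.$$

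For the second observation, the $\Aut Y$-action on $W_{XY}^L$ sends $(f,g,\delta)$ to $(f, yg, (y^{-1})^{\ast}\delta)$, so its stabilizer at a point consists of those $y \in \Aut Y$ with $yg = g$ (the equation on $\delta$ then being automatic). In an exact category the deflation $g \colon L \to Y$ is an epimorphism, so $yg = g$ forces $y = 1_Y$, and the action is free. Combined with the bijection $\varphi_{1} \colon V_{X\overline{Y}}^{L} \xrightarrow{\sim} (X,L)_Y$ from Lemma \ref{11}, orbit-stabilizer gives
$$|W_{XY}^L| \;=\; |\Aut Y| \cdot |V_{X\overline{Y}}^{L}| \;=\; |\Aut Y| \cdot |(X,L)_Y|,$$
and dividing by $|\Aut X||\Aut Y|$ yields one of the desired equalities. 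The dual argument, using that inflations are monomorphisms in an exact category together with the bijection $\varphi_{3} \colon V_{\overline{X}Y}^{L} \xrightarrow{\sim} {_X(L,Y)}$, produces the other.

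The only mildly delicate point is the freeness of the two actions, which in the exact setting is forced by the epi/mono properties of deflations and inflations; this is precisely the feature that breaks in the general extriangulated framework, and is what necessitates both the correction factor $\hbar_{XY}^L$ in Theorem \ref{main5} and the more intricate stabilizer computation of Lemma \ref{3}.
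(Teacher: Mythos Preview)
Your proof is correct, and your second observation is exactly the paper's argument: freeness of the $\Aut Y$-action (from deflations being epimorphisms) together with Lemma~\ref{11} gives $|W_{XY}^L| = |\Aut Y|\cdot|(X,L)_Y|$, and dually $|W_{XY}^L| = |\Aut X|\cdot|_X(L,Y)|$. Note, however, that your first observation via Theorem~\ref{main5} is redundant: once you have those two equalities, dividing each by $|\Aut X||\Aut Y|$ already yields all three quantities equal, so the detour through $\hbar_{XY}^L$ is unnecessary.
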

\begin{proof} Since each deflation in $\mathscr{C}$ is an epimorphism, the action of $\Aut Y$ on $W_{XY}^L$ is free. So, $$|(X,L)_{Y}|=|V_{X\overline{Y}}^{L}|=\frac{|W_{XY}^{L}|}{|\Aut Y|}.$$
Similarly, $|_{X}(L,Y)|=\frac{|W_{XY}^{L}|}{|{\rm Aut} X|}$.
Thus, we finish the proof.
\end{proof}

\section{Hall algebras of extriangulated categories, ${\rm{\RNum{2}}}$}
In this section, we assume that $\mathscr{C}$ has enough projective objects and enough injective objects, and it satisfies the conditions (2), (3) in Section 5, as well as the following conditions (see \cite[Condition 5.12]{Go}).

\begin{condition}\label{NN}

$(1)$~$\Hom(I,x)$ is a monomorphism for any injective object $I$ and any inflation $x$.

$(2)$~$\Hom(y,P)$ is a monomorphism for any projective object $P$ and any deflation $y$.
\end{condition}
If $\mathscr{C}$ is a triangulated category or exact category, then Condition \ref{NN} is trivially satisfied.

By \cite[Proposition 5.30]{Go}, for any objects $X,Y\in\mathscr{C}$, we have that $\dim_{k}\mathbb{E}_{\rm{\RNum{1}}}^{i}(X,Y)=\dim_{k}\mathbb{E}_{\rm{\RNum{2}}}^{i}(X,Y)$ for any for $i<0$. Hence, $\{X,Y\}_{\rm{\RNum{1}}}=\{X,Y\}_{\rm{\RNum{2}}}$, and we denote them by $\{X,Y\}$.

\begin{definition}\label{H1} The Hall algebra $\overline{\mathcal{H}}(\mathscr{C})$ of the extriangulated category $\mathscr{C}$ is a $\mathbb{Q}$-space with the basis $\{u_{[X]}~|~X\in \mathscr{C}\}$ and the multiplication defined by
\begin{equation*}u_{[X]}\diamond u_{[Y]}=\sum_{[L]}G_{XY}^{L}u_{[L]}\end{equation*}
where $$G_{XY}^{L}:=\frac{|_{X}(L,Y)|}{|\Aut Y|}\frac{\{L,Y\}}{\{Y,Y\}}=\frac{|(X,L)_{Y}|}{|\Aut X|}\frac{\{X,L\}}{\{X,X\}}.$$
\end{definition}

\begin{remark}\label{We} If $\mathscr{C}$ is a triangulated category, by Corollary \ref{12}, Definition \ref{H1} coincides with that in \cite{XF}. If $\mathscr{C}$ is an exact category, by Corollary \ref{Z}, Definition \ref{H1} coincides with that in \cite{Hu}.

\end{remark}
\begin{theorem}\label{main6} The Hall algebra $\overline{\mathcal{H}}(\mathscr{C})$ of the extriangulated category $\mathscr{C}$ is an associative algebra.
\end{theorem}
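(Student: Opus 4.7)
The plan is to mimic the proof of Theorem \ref{main3}, with the multiplicative factor $[-,-]$ replaced throughout by $\{-,-\}$. By definition of $\diamond$, the associativity identity $u_{[Z]}\diamond(u_{[X]}\diamond u_{[Y]})=(u_{[Z]}\diamond u_{[X]})\diamond u_{[Y]}$ reduces, for each $M\in\mathscr{C}$, to the numerical equality
$$\sum_{[L]}G_{XY}^{L}G_{ZL}^{M}=\sum_{[L']}G_{ZX}^{L'}G_{L'Y}^{M}.$$

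To attack the left hand side I would use the form $G_{XY}^{L}=\frac{|(X,L)_{Y}|}{|\Aut X|}\frac{\{X,L\}}{\{X,X\}}$ for the first factor and $G_{ZL}^{M}=\frac{|_{Z}(M,L)|}{|\Aut L|}\frac{\{M,L\}}{\{L,L\}}$ for the second. Biadditivity of each $\mathbb{E}^{-i}$ gives $\{X,L\}\{M,L\}=\{X\oplus M,L\}$, and Lemma \ref{Zh}(1) bundles pairs $(f,g)\in(X,L)_{Y}\times{_{Z}(M,L)}$ into matrix morphisms $(f\ g)\in{_{Z}^{L'}}(X\oplus M,L)_{Y}$ as $[L']$ varies. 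Hence the left hand side collapses to
$$\frac{1}{|\Aut X|\{X,X\}}\sum_{[L]}\sum_{[L']}\frac{|_{Z}^{L'}(X\oplus M,L)_{Y}|}{|\Aut L|}\frac{\{X\oplus M,L\}}{\{L,L\}}.$$
Dually, using the other expression for $G$ together with Lemma \ref{Zh}(2), the right hand side rewrites as
$$\frac{1}{|\Aut X|\{X,X\}}\sum_{[L']}\sum_{[L]}\frac{|_{Z}(L',X\oplus M)_{Y}^{L}|}{|\Aut L'|}\frac{\{L',X\oplus M\}}{\{L',L'\}}.$$

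It therefore suffices to establish the $\{-,-\}$-analog of Proposition \ref{fin}, namely that for every fixed pair $(L,L')$ the two innermost summands agree. I plan to obtain this in two steps. First, under the standing hypotheses of Section 6 one has $\{-,-\}_{\rm{\RNum{1}}}=\{-,-\}_{\rm{\RNum{2}}}$ and hence $\hbar_{XY}^{L}\equiv 1$, so Theorem \ref{main5} reduces to the clean identity $\frac{|_{X}(L,Y)|}{|\Aut Y|}\frac{\{L,Y\}}{\{Y,Y\}}=\frac{|(X,L)_{Y}|}{|\Aut X|}\frac{\{X,L\}}{\{X,X\}}$, which is precisely the well-definedness of $G_{XY}^{L}$. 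Second, inspection of the proofs of Lemmas \ref{1}--\ref{FZ} shows that every intermediate identity only uses fiberwise stabilizer counts for the group actions of $\Aut X,\Aut Y,\Aut L$ on $W_{XY}^{L}$; restricting to any $\Aut X\times\Aut Y$-stable subset $\mathcal{X}\subseteq W_{XY}^{L}$ leaves the stabilizers unchanged and merely alters the indexing of the surjections $\sigma_{i}$. This yields a $\{-,-\}$-analog of Lemma \ref{TT}, and applying it with $(X,Y,L)$ replaced by $(L',L,X\oplus M)$ and $\mathcal{X}=W(L',X\oplus M,L)_{ZY}$, combined with the bijections of Lemma \ref{Wa}, delivers the required equality of innermost summands.

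The main obstacle is therefore the mechanical verification that the orbit-stabilizer computations of Section 3, reused in Section 5, are genuinely insensitive to restriction along $\Aut$-stable subsets; there is no new geometric input required, only a careful bookkeeping argument. Once this is spelled out, the two iterated sums match term by term and the associativity of $\overline{\mathcal{H}}(\mathscr{C})$ follows.
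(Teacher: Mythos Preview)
Your proposal is correct and is exactly the approach the paper takes: the paper's own proof of Theorem \ref{main6} consists of the single sentence ``It is proved by the analogous arguments as those for proving Theorem \ref{main3},'' and what you have written is precisely a spelling-out of those analogous arguments with $[-,-]$ replaced by $\{-,-\}$ throughout. In particular, your reduction via Lemma \ref{Zh} to a $\{-,-\}$-version of Proposition \ref{fin}, and then to a $\{-,-\}$-version of Lemma \ref{TT} obtained from the orbit--stabilizer computations of Lemmas \ref{3}--\ref{FZ} together with Lemma \ref{5.1} in place of Lemma \ref{1}, matches the paper's intended argument exactly.
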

\begin{proof}It is proved by the analogous arguments as those for proving Theorem \ref{main3}.
\end{proof}

\section{Comparisons of two Hall algebras}
In this section, we assume that $\mathscr{C}$ satisfies all the conditions in Sections 3 and 6. In particular, $\mathscr{C}$ is {\em locally homologically finite}, i.e.,
for any objects $X,Y\in\mathscr{C}$,
$\sum_{i\in\mathbb{Z}}e^{i}(X,Y)<\infty$, where $e^{i}(X,Y):=\dim_{k}\mathbb{E}^{i}_{\rm{\RNum{1}}}(X,Y)=\dim_{k}\mathbb{E}^{i}_{\rm{\RNum{2}}}(X,Y)$ for each $i\in\mathbb{Z}$.

The {\em Grothendieck group} of the extriangulated category $\mathscr{C}$, denoted by $K(\mathscr{C})$, has been introduced in \cite{ZhuZhuang}, which is similar to those of triangulated categories and exact categories.

For any objects $X,Y\in\mathscr{C}$, define $\lr{X,Y}:=\sum\limits_{i\in\mathbb{Z}}(-1)^ie^{i}(X,Y)$. This sum is finite, since $\mathscr{C}$ is locally homologically finite. Applying Proposition \ref{exact}, we easily obtain that $\lr{-,-}$ descends to give a bilinear form
$$\lr{-,-}:K(\mathscr{C})\times K(\mathscr{C})\longrightarrow\mathbb{Z},$$
known as the {\em Euler form}. In the following, for an object $X$ in $\mathscr{C}$, we denote by $\hat{X}$ the image of $X$ in the Grothendieck group $K(\mathscr{C})$.

Let us consider a twisted version of the Hall algebra $\overline{\mathcal{H}}(\mathscr{C})$. Define $\overline{\mathcal{H}}_{\rm tw}(\mathscr{C})$ to be the same vector space as $\overline{\mathcal{H}}(\mathscr{C})$, but with the twisted multiplication defined by
$$u_{[X]}\ast u_{[Y]}=q^{-\lr{X,Y}}u_{[X]}\diamond u_{[Y]}.$$

\begin{proposition}
The Hall algebra ${\mathcal{H}}(\mathscr{C})$ is just the twisted Hall algebra $\overline{\mathcal{H}}_{\rm tw}(\mathscr{C})$.
\end{proposition}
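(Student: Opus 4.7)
The plan is to prove the scalar identity $F_{XY}^{L}=q^{-\lr{X,Y}}G_{XY}^{L}$ for all $X,Y,L\in\mathscr{C}$; the Proposition then follows at once from the definitions of the products in $\mathcal{H}(\mathscr{C})$ and in $\overline{\mathcal{H}}_{\rm tw}(\mathscr{C})$. Both structure constants share the common factor $|(X,L)_{Y}|/|\Aut X|$, and both vanish when no $\mathbb{E}$-triangle $X\to L\to Y\dashrightarrow$ exists, so I may assume $|(X,L)_{Y}|\neq 0$. Cancelling the common factor, the scalar identity reduces to
$$\frac{[X,L]}{[X,X]}=q^{-\lr{X,Y}}\,\frac{\{X,L\}}{\{X,X\}}.$$

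The central computational observation is that the Euler form packages together all three ``extension pieces'' $\{-,-\}$, $|(-,-)|$, and $\{-,-\}'$ into a single alternating product. Splitting $\lr{X,Y}=\sum_{i\in\mathbb{Z}}(-1)^{i}e^{i}(X,Y)$ into its negative, zero, and positive parts, and using $\mathbb{E}^{0}=\Hom$ together with $|\mathbb{E}^{i}(X,Y)|=q^{e^{i}(X,Y)}$, gives
$$q^{\lr{X,Y}}=\{X,Y\}\cdot|(X,Y)|\cdot\{X,Y\}'.$$
Substituting $[-,-]^{-1}=\{-,-\}'\cdot|(-,-)|$ into the left-hand side of the target identity and clearing, the identity becomes
$$q^{\lr{X,X}-\lr{X,L}}=q^{-\lr{X,Y}},$$
equivalent to $\lr{X,L}=\lr{X,X}+\lr{X,Y}$.

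The final step is to pass to the Grothendieck group. The assumed $\mathbb{E}$-triangle $X\to L\to Y\dashrightarrow$ gives $\hat{L}=\hat{X}+\hat{Y}$ in $K(\mathscr{C})$, and Proposition~\ref{exact} combined with the local homological finiteness of $\mathscr{C}$ ensures that $\lr{-,-}$ descends to a $\mathbb{Z}$-bilinear form on $K(\mathscr{C})\times K(\mathscr{C})$. Bilinearity in the second argument then yields
$$\lr{X,L}=\lr{X,\hat X+\hat Y}=\lr{X,X}+\lr{X,Y},$$
which is precisely the identity we needed.

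I do not anticipate a serious obstacle. The only delicate point is the bookkeeping that re-assembles the positive, zero, and negative extension contributions into $q^{\lr{X,Y}}$; once this is observed, the discrepancy between $F_{XY}^{L}$ and $G_{XY}^{L}$ is controlled entirely by the well-definedness of the Euler form on the Grothendieck group, and everything else is pure substitution.
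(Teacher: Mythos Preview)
Your proof is correct and follows essentially the same route as the paper: both arguments rest on the identity $q^{\lr{A,B}}=\{A,B\}/[A,B]$ together with the Grothendieck-group relation $\hat{L}=\hat{X}+\hat{Y}$ and bilinearity of the Euler form. The only cosmetic difference is that you work with the ``inflation'' expressions $F_{XY}^{L}=\dfrac{|(X,L)_Y|}{|\Aut X|}\dfrac{[X,L]}{[X,X]}$ and $G_{XY}^{L}=\dfrac{|(X,L)_Y|}{|\Aut X|}\dfrac{\{X,L\}}{\{X,X\}}$ and invoke bilinearity in the second variable, whereas the paper uses the ``deflation'' expressions involving $|_X(L,Y)|/|\Aut Y|$ and bilinearity in the first variable; neither choice gives any real advantage over the other.
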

\begin{proof}
By definition, \begin{equation}\frac{\{X,Y\}}{[X,Y]}=q^{\lr{X,Y}}.\end{equation}
Thus, we have that
\begin{equation*}\begin{split}
u_{[X]}\ast u_{[Y]}&=q^{-\lr{X,Y}}u_{[X]}\diamond u_{[Y]}\\
&=\sum\limits_{[L]} q^{-\lr{X,Y}}\frac{|_{X}(L,Y)|}{|\Aut Y|}\frac{\{L,Y\}}{\{Y,Y\}}u_{[L]}\\
&=\sum\limits_{[L]} q^{-\lr{\hat{L}-\hat{Y},\hat{Y}}}\frac{|_{X}(L,Y)|}{|\Aut Y|}\frac{\{L,Y\}}{\{Y,Y\}}u_{[L]}\\
&=\sum\limits_{[L]} \frac{q^{\lr{{Y},{Y}}}}{q^{\lr{L,Y}}}\frac{|_{X}(L,Y)|}{|\Aut Y|}\frac{\{L,Y\}}{\{Y,Y\}}u_{[L]}\\
&=\sum\limits_{[L]} \frac{\{Y,Y\}}{[Y,Y]}\frac{[L,Y]}{\{L,Y\}}\frac{|_{X}(L,Y)|}{|\Aut Y|}\frac{\{L,Y\}}{\{Y,Y\}}u_{[L]}\\
&=\sum\limits_{[L]}\frac{|_{X}(L,Y)|}{|\Aut Y|}\frac{[L,Y]}{[Y,Y]}u_{[L]}\\
&=\sum\limits_{[L]}F_{XY}^Lu_{[L]}.
\end{split}\end{equation*}
Therefore, we complete the proof.
\end{proof}

\section*{Acknowledgments}
The third author often got asked whether one can generalize the definitions of Hall algebras of exact categories and triangulated categories to extriangulated categories by many peer experts, such as Shengyong Pan, Tiwei Zhao, Panyue Zhou, Bin Zhu, and so on. We are grateful to them for encouraging to write this paper. We also would like to thank Bernhard Keller for suggestions and corrections to the earlier version.



\begin{thebibliography}{99}
\bibitem{FG} X. Fang, M. Gorsky, Exact structures and degeneration of Hall algebras, arXiv: 2005.12130v1.

\bibitem{Green} J.~A. Green, Hall algebras, hereditary algebras and quantum groups, Invent.
Math. {120} (1995), 361--377.

\bibitem{Go} M. Gorsky, H. Nakaoka, Y. Palu, Positive and negative extensions in extriangulated cateoties, arXiv: 2103.12482v1.

\bibitem{Hall} P. Hall, {The algebra of partitions}, in: {Proceedings of the 4th
Canadian Mathematical Congress, Banff 1957}, University of Toronto
Press, Toronto, 1959, pp.~147--159.

\bibitem{Hu} A. Hubery, From triangulated categories to Lie algebras: A theorem of Peng and Xiao,
Trends in representations theory of algebras and related topics, edited by Jos\'{e} A. de la
Pe\~{n}a and Raymundo Bautista, Contemporary Mathematics 406 (2006): 51--66.

\bibitem{LN} Y. Liu, H. Nakaoka, Hearts of twin cotorsion pairs on extriangulated categories, J. Algebra 528 (2019),
96--149.

\bibitem{Na} H. Nakaoka, Y. Palu, Extriangulated categories, Hovey twin cotorsion pairs and model structures, Cah. Topol. G\'{e}om. Diff\'{e}r. Cat\'{e}g. {\bf 60}(2) (2019), 117--193.

\bibitem{Ringel1} C.~M. Ringel, {Hall algebras and quantum groups}, Invent. Math. {101} (1990), 583--591.

\bibitem{St} E. Steinitz, {Zur Theorie der Abel'schen Gruppen},
Jahrsber. Deutsch. Math-Verein. {9} (1901), 80--85.

\bibitem{To} B. To\"en, {Derived Hall algebras}, Duke Math. J. {135} (2006), 587--615.

\bibitem{XF} J. Xiao, F. Xu, {Hall algebras associated to triangulated categories}, Duke Math. J. {143}(2) (2008), 357--373.

\bibitem{ZhuZhuang} B. Zhu, X. Zhuang, Grothendieck groups in extriangulated categories, J. Algebra {574} (201), 206--232.

















\end{thebibliography}
\end{document}